\newtheorem{thm}{Theorem}[section]
\newtheorem{theorem}[thm]{Theorem}
\newtheorem{lemma}[thm]{Lemma}
\newtheorem{definition}[thm]{Definition}
\theoremstyle{remark}
\newtheorem{remark}[thm]{Remark}
\numberwithin{equation}{section}
\newtheorem{example}[thm]{Example}
\newcommand{\set}[1]{\left\{#1\right\}}
\newcommand{\QED}{\hfill $\square$\vspace{2mm}}
\newcommand{\N}{\mathbb{N}}
\newcommand{\cL}{\mathcal{L}}
\begin{document}

\title{Normalization of singular contact forms and primitive 1-forms}

\author{Kai Jiang}
\address{Beijing international Center for mathematical research, Peking University}
\email{kai.jiang@bicmr.pku.edu.cn}
\author{Truong Hong Minh}
\address{Institut de Mathématiques de Toulouse, UMR5219, Université Toulouse 3}
\email{minhup@gmail.com}
\author{Nguyen Tien Zung}
\address{Institut de Mathématiques de Toulouse, UMR5219, Université Toulouse 3}
\email{tienzung.nguyen@math.univ-toulouse.fr}
\thanks{N.T. Zung is partially supported by a research consulting contract with the
Center for Geometry and Physics, Institute for Basic Science (South Korea)}

\begin{abstract}{%
A differential 1-form $\alpha$ on a manifold
of odd dimension $2n+1$, which satisfies the 
contact condition $\alpha \wedge (d\alpha)^n \neq 0$ almost everywhere, but which vanishes at a point $O$, i.e. $\alpha (O) = 0$, is called a \textit{singular contact form} at $O$. The aim of this paper is  to study local normal forms (formal, analytic and smooth) of such singular contact forms. Our study leads naturally to the study of normal forms
of singular primitive 1-forms of a symplectic form $\omega$
in dimension $2n$, i.e. differential 1-forms $\gamma$  which vanish at a point and such that $d\gamma = \omega$, and their corresponding conformal vector fields. Our results are an extension and improvement of previous results obtained by other authors, in particular Lychagin \cite{Lychagin-1stOrder1975},
Webster \cite{Webster-1stOrder1987} and Zhitomirskii \cite{Zhito-1Form1986,Zhito-1Form1992}. We make use of both the classical normalization techniques and the toric approach to the normalization problem for dynamical systems \cite{Zung_Birkhoff2005,Zung_Integrable2016,Zung_AA2018}.

}\end{abstract}

\date{First version, April 2018}
\keywords{normalization, singular contact form, primitive 1-form}%

\maketitle

\section{Introduction}
Recall that a differential $1$-form $\alpha$ on a manifold $M$ 
over the field $\mathbb{K}$, where $\mathbb{K} = \mathbb{R}$
or $\mathbb{C}$, is called a \textbf{\textit{contact form}} if it is \emph{maximally nonintegrable} in the sense that $\alpha\wedge(d\alpha)^n\neq 0$ everywhere, where $1+2n=\dim M$ is the dimension of $M$. The corank-$1$ kernel distribution $\mathcal{D}=\ker\alpha$ on $M$ is called a contact structure in this case. 
According to the classical theorem of Darboux, all contact forms are locally isomorphic to the form $dx_0+\sum_{i=1}^nx_idx_{n+i}$, i.e. there is no local contact invariant, except for the dimension of the manifold 
(see, e.g., \cite{Geiges-Contact2004}). The situation becomes more interesting at singular points $O$, where $\alpha\wedge(d\alpha)^n(O)= 0$,
i.e., where $\alpha$ ceases to be contact. 

There are two types of singular points of degenerate contact forms:

\begin{enumerate}
\item Points $O$ such that $\alpha(O)\neq 0$, i.e. the kernel distribution $\mathcal{D}=\ker\alpha$ is still regular at $O$, but $\alpha\wedge(d\alpha)^n(O)=0$; 
\item Points $O$ such that $\alpha(O)=0$.
\end{enumerate}

Singular points of the first type have been extensively studied by many 
authors, such as Martinet, Roussarie, Pelletier, Zhitomirskii and Jakubczyk, 
see, e.g., \cite{JaZh-NF1995,JaZh-SingularContact2001, JaZh-Corank1-2003, Martinet-Formes1970, Pelletier-1Form1985, Russarie-Local1975, Zhito-Pfaff1989, Zhito-1Form1992}. In this paper, we are interested in singular points of the second type, i.e. points $O$ such that $\alpha(O)=0$. We will say that $\alpha$ is a \textit{\textbf{singular contact form}} in this case. Results on the local structure of singular contact 1-forms, and the corresponding singular first-order partial differential equations, have also been obtained by some authors, in particular Lychagin \cite{Lychagin-1stOrder1975},
Webster \cite{Webster-1stOrder1987} and Zhitomirskii \cite{Zhito-1Form1986,Zhito-1Form1992}. The aim of our paper is to make a deeper study and improve these previous results. 

We will say that $O$ is a \textbf{\textit{nondegenerate singular point}}  of a singular contact form $\alpha$ in $2n+1$
dimensions
if it satisfies the following two nondegeneracy conditions:
\begin{itemize}
\item  $d\alpha$ is a regular presymplectic form of rank $2n$ and corank 1 near $O$, i.e.
\begin{equation} 
\label{eqn:alpha-n}
(d\alpha)^n(O)\neq 0.
\end{equation}
\item Denote by $f_i(x)$ the coefficients of $\alpha$
in a  local coordinate system  $(x_0,\ldots,x_{2n})$ near $O$, $\alpha=\sum_{i=0}^{2n}f_i(x)dx_i$, then
\begin{equation}\label{eqn:alphaF}
F=(f_0,\ldots,f_{2n}): (M^{2n+1}, O)\rightarrow (\mathbb{K}^{2n+1}, 0) \ \text{is a local diffeomorphism}.
\end{equation}
\end{itemize}

Clearly, the above nondegeneracy condition is a generic and stable condition: any small perturbation of $\alpha$ will also admit a unique nondegenerate singular point (where it vanishes) near $O$. Notice also that Conditions \eqref{eqn:alpha-n} and \eqref{eqn:alphaF} are independent and do not imply each other. For example,
$\sum_{i=0}^{2n} x_i dx_i$ satisfies \eqref{eqn:alphaF} but does not satisfy \eqref{eqn:alpha-n}. On the other hand,
$x_1dx_1 + \sum_{i=1}^{n} x_i dx_{i+n}$ 
satisfies \eqref{eqn:alpha-n} but does not satisfy \eqref{eqn:alphaF}. All singular points of singular contact
forms in this paper will be assumed to be nondegenerate. 

It follows from Equation \eqref{eqn:alpha-n} that the kernel $\ker(d\alpha)(x)$ of $d\alpha$ is of dimension $1$ for all $x$ near $O$. 
Let $Z$ be a local nowhere-vanishing vector field which generates $\ker(d\alpha)$, i.e., $\ker(d\alpha)(x)=\mathbb{K}.Z(x)$ for all $x$ 
near $O$. By the classical Darboux theorem, there is a (smooth or analytic) coordinate system $(\theta, x_1,\ldots, x_{2n})$ in a neighborhood $\mathcal{U}(O)$
of $O$ such that the components of $Z$ satisfies $Z_\theta\neq0, Z_{x_i}=0 $ for every $i$, and the presymplectic form
$d\alpha$ has the following canonical form:
\begin{equation}
d\alpha=\sum_{i=1}^n dx_i\wedge dx_{n+i}.
\end{equation} 
In particular, we have
$\alpha\wedge(d\alpha)^n=(-1)^{\frac{(n-1)n}{2}}n! \langle \alpha, \dfrac{\partial}{\partial \theta}\rangle d\theta\wedge\bigwedge_{i=1}^{2n}dx_i.
$
Due to the nondegeneracy condition \eqref{eqn:alphaF}, the function $f_\theta = \langle \alpha, \dfrac{\partial}{\partial \theta}\rangle$ (which is just a coefficient function of $\alpha$) is a regular function on $\mathcal{U}(O)$ which vanishes at $O$,
hence the set $N$ of non-contact points in  
$\mathcal{U}(O)$, i.e. 
\begin{equation}
N=\{x\in\mathcal{U}(O):\alpha(Z)(x)=0\}
\end{equation}
is a regular hypersurface. Now we have two possibilities: 
either the kernel of $d\alpha$ at $O$ (which is generated
by $Z := \dfrac{\partial}{\partial \theta}$) is transversal
to $N$ at $O$, or it is tangent, so we make the following definition:

\begin{definition}
A point  $O$  is called a \textbf{transversal nondegenerate singularity} of a singular contact 
form $\alpha$ if $O$ is a nondegenerate singularity of $\alpha$ and the set $N$ of non-contact points is transverse to the kernel  $\ker(d\alpha)(O)$;
$O$ is called a \textbf{tangent nondegenerate singularity} if $O$ is nondegenerate and $N$ is tangent to the kernel $\ker(d\alpha)(O)$.
\end{definition}

Both of the above cases can be easily realized, as in the
following examples in $\mathbb{K}^3$ with a coordinate system $(\theta, x_1, x_2)$:
\begin{itemize}
\item[1)] $\alpha=\theta d\theta+\frac{1}{2}(x_1dx_2-x_2dx_1)$, $N=\{\theta = 0\}$ which is transverse to $Z=\dfrac{\partial}{\partial\theta}$.
\item[2)] $\alpha=x_1d\theta+(\theta-x_2)dx_1$,  $N=\{x_1=0\}$ which is tangent to 
$Z=\dfrac{\partial}{\partial\theta}$.
\end{itemize}

One of the main results of our paper is the following 
pre-normalization theorem:

\begin{theorem} \label{thm:prenormalization}
Let $O$ be a nondegenerate singularity of a smooth (resp., real or complex analytic) singular contact 1-form $\alpha$ on a manifold
of dimension $2n+1$. Then there is a local smooth (resp., analytic)
coordinate system $(\theta, x_1, \hdots, x_{2n})$ in which
$\alpha$ has the expression
\begin{eqnarray} \label{eqn:alphaTransversal}
\alpha = \theta d \theta + \gamma   
\end{eqnarray}
in the transversal case, or the expression
\begin{eqnarray} \label{eqn:alphaTangent}
\alpha = d(\theta^3 - x_1\theta) + \gamma   
\end{eqnarray}
in the tangent case with a generic tangency, 
where $\gamma = \sum_{i=1}^{2n} g_i d x_i$
is a 1-form which is basic with respect to  $\dfrac{\partial}{\partial\theta}$, i.e. the functions $g_i$ do not depend on
$\theta$, and such that 
\begin{eqnarray} 
d\gamma = \sum_{i=1}^n d x_i \wedge d x_{i+n}   
\end{eqnarray}
is a symplectic form in $2n$ variables (which can be put into Darboux canonical form).
\end{theorem}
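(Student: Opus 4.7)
The plan is to reduce the statement to the normalization of a single scalar ``generating function'' $h$ built from $\alpha$.

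\textbf{Setup.} The classical Darboux theorem for presymplectic forms (explained in the paragraphs preceding the theorem) provides coordinates $(\theta, x_1,\ldots,x_{2n})$ near $O$ in which $\ker(d\alpha)$ is generated by $\partial/\partial\theta$ and $d\alpha = \sum_{i=1}^n dx_i\wedge dx_{i+n}$. Set $\gamma_0 := \sum_{i=1}^n x_i\, dx_{i+n}$, so that $d\gamma_0 = d\alpha$; the $1$-form $\alpha - \gamma_0$ is closed and vanishes at $O$, hence by the Poincar\'e lemma
\begin{equation*}
\alpha = dh + \gamma_0
\end{equation*}
for a unique function $h$ with $h(O)=0$ and $dh(O)=0$. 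Note that $\partial h/\partial\theta$ is the coefficient $f_\theta$ of $d\theta$ in $\alpha$, so the non-contact set $N$ is the zero locus of $\partial h/\partial\theta$, and the tangency of $N$ with $\partial/\partial\theta$ at $O$ is encoded by the vanishing of $(\partial^2 h/\partial\theta^2)(O)$.

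\textbf{Transversal case.} Here $(\partial^2 h/\partial\theta^2)(O)\neq 0$. The parameterized Morse lemma (smooth, real analytic, and complex analytic versions are all classical), applied with $x_1,\ldots, x_{2n}$ as parameters, produces a coordinate change $\tilde\theta = \tilde\theta(\theta, x)$ leaving the $x_i$ fixed, together with a function $r(x)$, such that $h = \tfrac{1}{2}\tilde\theta^2 + r(x)$ (in the real case up to the sign of the Hessian, absorbed by rescaling). Therefore $\alpha = \tilde\theta\,d\tilde\theta + (dr+\gamma_0)$, and $\gamma := dr + \gamma_0$ is manifestly basic with $d\gamma = \sum dx_i \wedge dx_{i+n}$, giving \eqref{eqn:alphaTransversal}.

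\textbf{Tangent case.} Now $(\partial^2 h/\partial\theta^2)(O) = 0$, and the generic tangency hypothesis gives $(\partial^3 h/\partial\theta^3)(O)\neq 0$. Combined with \eqref{eqn:alphaF}, this forces $(\partial f_\theta/\partial x_i)(O)\neq 0$ for some $i$, which we take to be $i=1$. Thus $h$ has a parameterized $A_2$ (cusp) singularity in $\theta$ at $O$, versally unfolded by $x_1$. In order that the new $\tilde\theta$-coordinate still generate $\ker(d\alpha)$, we must work with a \emph{fibered} normal-form diffeomorphism $(\theta, x)\mapsto(\tilde\theta(\theta, x), \tilde x(x))$, in which $\tilde x$ does not depend on $\theta$. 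The fibered version of the Thom--Mather--Arnold $A_2$ unfolding theorem (available in all three categories) supplies such a diffeomorphism together with a function $r(\tilde x)$ so that $h = \tilde\theta^3 - \tilde x_1\tilde\theta + r(\tilde x)$, whence
\begin{equation*}
\alpha = d\bigl(\tilde\theta^3 - \tilde x_1\tilde\theta\bigr) + \bigl(dr + \gamma_0\bigr).
\end{equation*}
The $2$-form $d\gamma_0$ on the $\tilde x$-slice is symplectic but not in general in Darboux form; since $d\tilde x_1(O)\neq 0$, the relative Darboux theorem furnishes coordinates $(y_1,\ldots, y_{2n})$ on that slice with $y_1 = \tilde x_1$ in which $d\gamma = \sum dy_i\wedge dy_{i+n}$. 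The cubic $d(\tilde\theta^3 - \tilde x_1\tilde\theta) = d(\tilde\theta^3 - y_1\tilde\theta)$ is preserved by this last change, yielding \eqref{eqn:alphaTangent}.

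\textbf{Main obstacle.} The critical technical point is the tangent case: the cusp normalization must be carried out through a \emph{fibered} diffeomorphism, so as not to destroy the condition that $\partial/\partial\tilde\theta$ generates $\ker(d\alpha)$, and one must then restore the Darboux form of $d\gamma$ while preserving the distinguished function $\tilde x_1$ appearing in the cubic. Both ingredients are classical --- the fibered $A_2$ normal form follows from the Malgrange preparation theorem applied to the family $\partial h/\partial\theta$, and the relative Darboux theorem supplies the final adjustment --- and their convergent analytic analogues are standard.
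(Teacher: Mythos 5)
Your proposal is correct in substance but takes a genuinely different --- and in the tangent case considerably more economical --- route than the paper. In the transversal case the two arguments are close in spirit: the paper builds the fiberwise primitive $h(x)=\int_\Gamma\alpha$ from the hypersurface $N$ and applies Morse--Bott normalization, while you obtain $h$ globally from the decomposition $\alpha=dh+\gamma_0$ with $\gamma_0=\sum x_i\,dx_{i+n}$; the two functions differ only by a function of the $x_i$ (both have $\partial h/\partial\theta=f_\theta$), and your version makes the basicness of $\gamma=dr+\gamma_0$ immediate. In the tangent case the paper never invokes singularity theory: it normalizes $\beta=h(x,\theta)(\theta^2-x)\,d\theta$ by hand through four lemmas (balancing the two integrals across $N$, rescaling so that $\int_0^{\sqrt x}\beta=-\tfrac{2}{3}x^{3/2}$, normalizing on $\{x=0\}$, and a final implicit-function-theorem step), which amounts to an elementary, self-contained proof of exactly the fibered $A_2$ statement you quote; you instead cite the $\mathcal{R}^+$-versality theorem for the cusp, whose hypothesis (some $\partial^2h/\partial\theta\,\partial x_i(O)\neq0$) you correctly derive from \eqref{eqn:alphaF} together with the order-two tangency, and whose output $h=\tilde\theta^3-\tilde x_1\tilde\theta+r(\tilde x)$ with $d\tilde x_1(O)\neq0$ feeds into the same final Carath\'eodory--Jacobi--Lie (relative Darboux) step that the paper also uses. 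Your approach buys brevity and a clean conceptual identification of the tangent normal form with the $A_2$ versal unfolding; the paper's buys self-containedness and uniform elementary arguments in all three categories without appeal to Thom--Mather theory. Two small caveats: the real-analytic version of the versality theorem, which you need, is true but less commonly stated than the smooth and holomorphic ones and deserves a reference; and in the real transversal case the sign of the Hessian of $h$ along the kernel direction is a genuine invariant, so $-\theta\,d\theta+\gamma$ cannot be converted to $\theta\,d\theta+\gamma$ by a real rescaling --- but the paper's own proof carries the same $\pm$ while its theorem statement drops it, so this is an imprecision you share with the authors rather than a gap in your argument.
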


We remark that the transversal case of the above theorem was already obtained by Zhitomirskii \cite{Zhito-1Form1992}, but the tangent case is new, as far as we know. Theorem \ref{thm:prenormalization}  remains true if we replace the nondegeneracy condition 
\eqref{eqn:alphaF} by a weaker condition: the map $F$ is not required to be a local diffeomorphism; only one of the coefficients of $\alpha$, namely the one corresponding to the kernel of $d\alpha$, is required to be a regular function. (That's what we used in the proof of the theorem). 

Theorem \ref{thm:prenormalization} allows us to reduce the problem of local normalization of singular contact forms (under a nondegeneracy condition)
to the problem of local normalization of 
1-forms $\gamma$ such
that $\omega = d\gamma$ is a symplectic form. We will call such a form 
$\gamma$ a \textbf{\textit{primitive form}} (of the symplectic form $\omega =d\gamma$, which can be assumed to be canonical, due to Darboux theorem). This problem will be studied in Section \ref{section:primitive} of this paper, via the normalization of the associated vector field 
 $X$ defined by:
\begin{equation}
i_X \omega = \gamma,
\end{equation}
which is a \textit{\textbf{conformal vector field}} of $\omega$
in the sense that
\begin{equation}
\mathcal{L}_X \omega = \omega. 
\end{equation}
Applying the theory of formal, analytic and smooth normalizations of vector fields to $X$, we obtain a (formal,
smooth or analytic) canonical system of coordinates
$(x_1,\hdots,x_{2n})$ which normalizes both $\omega$ and
$X$ simultaneously, i.e. the symplectic form is 
\begin{equation}
\omega = \sum_i dx_i \wedge dx_{n+i}
\end{equation}
and the Taylor power series of $X$ in this coordinate system can be written as
\begin{equation}
X = X^s + X^n + \sum_{k \geq 2} X^{(k)},
\end{equation}
where $X^s$ is the semisimple part of the linear part of $X$ in the normal form (hence $X^s$ is also the semisimple part of the whole $X$), 
$X^n$ is the nilpotent part of the linear part of $X$ in the normal form, and  $X^{(k)}$ is the resonant part of degree $k$ of $X$: $[X^s, X^{(k)}] = 0$ for all $k \geq 2$. The fact that
that $\mathcal{L}_X \omega = \omega$ also implies the eigenvalues
of $X$ come in pairs of numbers $(\lambda_i, 1-\lambda_i)$ 
whose sum is equal to 1 (similarly to the Hamiltonian case,
where the eigenvalues come in pairs of opposite numbers).
By contracting $X$ with $\omega$ in normal form, we get the following theorem
on the normalization of $\gamma$:

\begin{theorem} \label{thm:PrimitiveNF}
Let $\gamma$ be a smooth or analytic primitive form of a symplectic form $\omega$ in $2n$ dimensions, i.e., $d\gamma = \omega$, such that $\gamma$ vanishes at a point $O$. Denote by $X$ the associated conformal vector field, i.e. $i_X \omega = \gamma$. 
Then we have:

i) (Formal normalization) $\gamma$ can be formally normalized (over $\mathbb{C}$) such that $\omega$ is the standard symplectic form and $X$ is in the Poincar\'e-Dulac normal form with diagonal semisimple part $X^s$. 

It follows that $X^s$ is also a conformal vector field of $\omega$ in the sense that $\cL_{X^s}\omega=\omega$ and the linear nilpotent part $X^n$ together with the higher order part $X^{(k)}$ for $k\geq2$ are (degenerate) Hamiltonian vector fields of $\omega$, and the corresponding Hamiltonian functions are conformally preserved by $X^s$.

Concretely, $\gamma$ has an explicit expression as follows: there are positive integers $n_1=1<n_2<\cdots<n_{k}<n_{k+1}=n+1$, and eigenvalues $\lambda_1, \hdots, \lambda_{2n}$ of $X$, such that $\lambda_i + \lambda_{n+i} = 1$ for all $i =1,\hdots, n$,
$\lambda_i = \lambda_j$ for any $n_s \leq i < j < n_{s+1}$ ($s=1,\hdots, k)$, and
\begin{equation}
\gamma=\sum_{i=1}^k (\gamma_i +dQ_i)+dR;
\end{equation}
where 
\begin{equation}
\gamma_i=\sum_{j=n_i}^{n_{i+1}-1}\left(\lambda_{n_i} x_{j}dx_{n+j}+(\lambda_{n_i}-1) x_{n+j}dx_{j}\right);
\end{equation}
and 
\begin{itemize}
\item{} If $\lambda_i\neq\frac{1}{2}$ then $Q_i=0$ or $Q_i=\sum_{j=n_i}^{n_{i+1}-2}x_{j+1}x_{n+j}$;
\item{} If $\lambda_i=\frac{1}{2}$ then $Q_i$ belongs to one of the following four cases:
	\begin{itemize}
    \item{} $Q_i=0$;
	\item{} $Q_i=x_{n_i+n}^2$ with $n_{i+1}=n_i+1$ in this case;
	\item{} $Q_i=2\sum_{j=n_i}^{n_{i+1}-2}x_jx_{n+j+1}+(-1)^{n_{i+1}-n_i}x_{n_{i+1}-1}^2$;
	\item{} $Q_i=2\sum_{j=n_i}^{n_{i+1}-2}x_jx_{n+j+1}$ with $n_{i+1}-n_i\geq 3$ and is an odd number;
	\end{itemize}
\end{itemize}
and $R$ is a function of $x_1,\ldots,x_{2n}$ whose infinite jet at $0$ contains only the monomial terms (up to constant coefficients) $\prod x_i^{\alpha_i}$ satisfying a resonance relation
\begin{equation}
\label{ResonantRelation}
\sum_{i=1}^{2n}\alpha_i \lambda_i = 1 \quad \text{with}\;
\sum_{i=1}^{2n}\alpha_i\geq 3.
\end{equation}

ii) (Analytic normalization) If $\gamma$ is analytic and the eigenvalues of $X$ satisfies a Diophatine condition, e.g. the Bruno condition (see Definition \ref{def:Bruno}), then $\gamma$ can be analytically normalized to the above normal form.

iii) (Smooth normalization) If $\gamma$ is smooth and $X$ is hyperbolic, i.e., none of its eigenvalues $\lambda_1,\ldots,\lambda_n$,$1-\lambda_1, \ldots, 1-\lambda_n$ lies on the imaginary axis, then $\gamma$ is smoothly normalizable.

iv) (Linearization) In particular, if there are no resonance relation, i.e., there does not exist any $2n$-tuple of nonnegative integers $(\alpha_1,\ldots,\alpha_{2n})$ satisfying \eqref{ResonantRelation}, then $\gamma$ is smoothly linearizable, or analytically linearizable under the Bruno condition.
\end{theorem}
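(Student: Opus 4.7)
The plan is to pass from $\gamma$ to its associated conformal vector field $X$ (defined by $i_X\omega = \gamma$), apply classical normalization theorems to $X$ while keeping $\omega$ in canonical Darboux form, and then recover the normal form of $\gamma$ by contraction. The first step is structural: from $d\gamma = \omega$ and Cartan's formula,
\[
\mathcal{L}_X\omega = d\,i_X\omega + i_X\,d\omega = d\gamma = \omega,
\]
so $X$ is conformal and vanishes at $O$. Linearizing at $O$, the relation $\mathcal{L}_X\omega=\omega$ forces the eigenvalues of $DX(O)$ to come in pairs $(\lambda_i, 1-\lambda_i)$ summing to $1$, the conformal analogue of the Hamiltonian $\pm$ pairing.

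For part (i), I would carry out a Poincar\'e--Dulac normalization of $X$ restricted to the conformal-symplectic category: at each degree $k\geq 2$, the cohomological equation $[X^s, Y] = X_k - X_k^{\mathrm{res}}$ is solved with generator $Y$ chosen to be a Hamiltonian vector field, which is possible because $\operatorname{ad}_{X^s}$ preserves the space of Hamiltonian vector fields and the part of $X_k$ not commuting with $X^s$ is already Hamiltonian. This brings $X$ to the form $X^s + X^n + \sum_{k\geq 2} X^{(k)}$ while keeping $\omega$ in Darboux form. Since the trace-$1$ content of $\mathcal{L}_X\omega$ sits in the nonresonant component, one concludes $\mathcal{L}_{X^s}\omega = \omega$ (so $X^s$ is itself conformal), while $X^n$ and all $X^{(k)}$ are symplectic and hence locally Hamiltonian. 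Contracting $X$ in normal form with $\omega$ then yields $\gamma$ block by block: for each Jordan block of $X^s$ of size $m_i = n_{i+1}-n_i$ with eigenvalue $\lambda_{n_i}$, the diagonal-linear part contributes exactly $\gamma_i$, and the nilpotent plus higher-order content contributes $dQ_i$ (with $dR$ collecting the cross-block higher-order resonant Hamiltonians). When $\lambda_{n_i}\neq\frac{1}{2}$, the blocks for $\lambda_{n_i}$ and $1-\lambda_{n_i}$ are distinct and dual, giving $Q_i = 0$ or $Q_i = \sum x_{j+1}x_{n+j}$; when $\lambda_{n_i} = \frac{1}{2}$, a single block is self-dual under the pairing and a classification of self-dual nilpotent Hamiltonians (by parity of block size and symmetric/skew Jordan pairing) produces exactly the four listed cases. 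The resonance condition $\sum \alpha_i \lambda_i = 1$ with $\sum \alpha_i \geq 3$ is precisely the statement that the monomial $x^{\alpha}$ is a weight-$1$ eigenfunction of $X^s$, which characterizes the Hamiltonians that survive in $R$.

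Parts (ii)--(iv) then follow by invoking the standard analytic and smooth normalization theorems for vector fields, carried out within the symplectic category by the same Hamiltonian-generator trick as in (i). Under the Bruno condition on $\{\lambda_i\}$, Bruno's convergence theorem (or the toric approach of \cite{Zung_Birkhoff2005,Zung_Integrable2016,Zung_AA2018} cited in the paper) extends to conformal-symplectic vector fields and gives (ii); in the hyperbolic case, the smooth Chen--Sternberg normalization adapted to the symplectic setting gives (iii); and (iv) is tautological once (ii) and (iii) are in place, since the absence of any resonance $\sum \alpha_i\lambda_i = 1$, $\sum\alpha_i\geq 3$, eliminates every higher-order resonant term. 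The main obstacle will be threefold: first, proving at each normalization step that the cohomological equation admits a Hamiltonian solution, so that $\omega$ stays canonical; second, executing the $\lambda = \frac{1}{2}$ block analysis, which is a small but delicate symplectic-linear-algebra problem with four inequivalent outcomes; and third, verifying that the symplectic refinement survives in the convergent and smooth regimes, where the classical Bruno and Chen--Sternberg arguments must be checked to be compatible with the Hamiltonian-generator choice.
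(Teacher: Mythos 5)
Your formal step (i) is correct but takes a genuinely different route from the paper. You propose to normalize $X$ \emph{inside} the symplectic category, solving each cohomological equation with a Hamiltonian generator so that $\omega$ stays in Darboux form throughout; this works because, once $\omega$ is constant and the linear part is conformal, the whole higher-order part of $X$ is symplectic, and the operator $G\mapsto X^s(G)-G$ is invertible on nonresonant polynomials. The paper instead decouples the two problems: it normalizes $X$ with an arbitrary (not necessarily symplectic) Poincar\'e--Dulac transformation, then invokes a ``conservation law'' lemma ($\mathcal{L}_X\omega=\omega$ for a formal or analytic $\omega$ forces $\mathcal{L}_{X^s}\omega=\omega$), and finally runs an $X^s$-equivariant Moser path argument to put $\omega$ back into constant form without destroying the normal form of $X$. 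Both routes give the same block analysis and the same classification of the quadratic Hamiltonians $Q_i$ (including the four self-dual cases at $\lambda=\tfrac12$). The paper's decoupling pays off in part (ii): one can quote Bruno's convergence theorem for $X$ as a black box and then apply the (analyticity-preserving) equivariant Moser step, whereas your plan requires re-proving Bruno's iteration with Hamiltonian generators, which is not off the shelf. You flag this, but the cleaner fix is the paper's: normalize first, symplectify afterwards.

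The genuine gap is in part (iii). ``Chen--Sternberg adapted to the symplectic setting'' is not a routine adaptation, and the obstacle is not merely that the generators must be checked to be Hamiltonian: the conservation-law lemma itself \emph{fails} in the smooth category. The paper exhibits a smooth hyperbolic Liouville field $X$ of $\omega_1=(1+F)\,dx\wedge dy$ (with $F$ flat at the origin) which is in Poincar\'e--Dulac normal form while $X^s$ is \emph{not} a Liouville field of $\omega_1$; so after a smooth Sternberg--Chen normalization of $X$ there is no reason for $X^s$ to conformally preserve $\omega$, and neither your Hamiltonian-generator scheme nor an equivariant Darboux step can be run. The missing ideas are: (a) after a Moser step taking $\omega$ to $\omega_0$, use Borel's theorem to build a \emph{genuine} smooth Liouville normal form $\tilde X$ of $\omega_0$ having infinite contact with the formal normal form of $X$; and (b) the Manouchehri--Chaperon theorem that two formally conjugate smooth hyperbolic Liouville fields of the same symplectic form are conjugate by a \emph{symplectomorphism} --- proved via the identity $(\phi_X^t)^*\omega=e^t\omega$, which makes $\phi_{\tilde X}^{-t}\circ\phi_X^{t}$ automatically symplectic, combined with the fact that the Sternberg--Chen conjugacy can be realized in that flow form. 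Without (a) and (b) your smooth case does not close; with them, your outline becomes the paper's proof.
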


To get normal forms of singular contact forms, one simply 
puts Theorem \ref{thm:prenormalization} and Theorem \ref{thm:PrimitiveNF} together. Notice that, when the primitive 1-form $\gamma$ in the 
pre-normal form  \eqref{eqn:alphaTangent} in the tangent case is normalized, the old function $x_1$ 
in \eqref{eqn:alphaTangent} becomes a regular function 
$\phi(x_1,\hdots,x_{2n})$ in the new normalized coordinate system for
$\gamma$, which cannot be made into a linear function in these new
coordinates in general, because it is unrelated to $\gamma$. 

\medskip

The rest of this paper is organized as follows: In Section 2 we study the normalization of singular primitive 1-forms by applying the classical methods of normalization, and also the toric approach 
to the problem of normalization of vector fields, to the associated conformal vector fields of these 1-forms. Theorem \ref{thm:PrimitiveNF}
is proved in this section. In Section 3 we reduce the problem of normalization of singular contact forms to the problem of normalization of
primitive 1-forms for nondegenerate singularities, both in the transversal case and the tangent case, and prove the pre-normalization theorem (Theorem \ref{thm:prenormalization}). This theorem together with Theorem \ref{thm:PrimitiveNF} give us normalization and linearization of singlar contact forms.

\section{Normalization of primitive 1-forms}
\label{section:primitive}
 
\subsection{Preliminaries}

Let $\gamma$ be a local primitive 1-form on $\mathbb{K}^{2n}$, so that $d\gamma=\omega$ is a symplectic form. 
Since $\omega$ is nondegenerate, there exits a unique vector field $X$ such that $\gamma=i_X\omega$. It follows from Cartan's formula that
\begin{equation}
\cL_X\omega=i_Xd\omega+di_X\omega=d\gamma=\omega.
\end{equation}
Because of this equation, we say that $X$ a \textit{\textbf{conformal vector field}} or \textit{\textbf{Liouville vector field} }  of $\omega$.
Assume that the origin $O$ of $\mathbb{K}^{2n}$ is a singular point of $\gamma$, i.e., $\gamma(O)=0$. Then $X(O) = 0$, and one can talk about the normalization a la Poincare-Birkhoff of $X$ at 0 as follows. Viewing $X$ as a linear operator acting by derivation on the space of formal functions (i.e., power series), we can decompose $X$ into the sum of its semisimple part and its nilpotent part, just like the Jordan decomposition for finite-dimensional endomorphisms:
\begin{equation}
X = X^S + X^N, \quad [X^S, X^N] = 0,
\end{equation}
and $X^S$ is (formally) diagonalizable over $\mathbb{C}$: there exists a
new formal coordinate system $(x_1,\hdots,x_{2n})$ in which the Taylor series of $X$ is
\begin{equation}
X = X^s + X^n + \sum_{k \geq 2} X^{(k)}
\end{equation}
where $X^{(k)}$ consists of homogeneous terms of degree $k$, $X^n$ is linear nilpotent, 
\begin{equation}
X^S = X^s = \sum_{i=1}^{2n} \lambda_i x_i \dfrac{\partial}{\partial x_i}
\end{equation}
is a diagonal vector field and $X^N = X^n + \sum_{k \geq 2} X^{(k)}$. The minimal number $\tau$ such that $ X^s$ can be written as
\begin{equation}
X^s = \sum_{i=1}^{\tau} c_i Z_i,
\end{equation}
with 
\begin{equation}
Z_i = \sum_{j=1}^{2n} \rho_{ij} x_j \dfrac{\partial}{\partial x_j}
\end{equation}
being diagonal vector fields with integer coefficients ($\rho_{ij} \in \mathbb{Z}$), is called the \textbf{\textit{toric degree}} of $X$ at $O$.
 The fact that $\tau$ is minimal is equivalent to the fact that the numbers $c_1,\hdots, c_\tau$ are incommensurable. The vector fields 
 $\sqrt{-1}Z_1, \hdots, \sqrt{-1}Z_\tau$ generate a (formal) effective
 torus $\mathbb{T}^\tau$-action (in the complexified space if $X$ is real),
 which is intrinsic (i.e., it depends only on $X$ and not on the choice of
 coordinates), unique up to automorphisms of $\mathbb{T}^\tau$, and is called the \textit{\textbf{associated torus action}} of $X$ at $O$. The linearization of this associated torus action is equivalent to the normalization \`a la Poicar\'e-Birkhoff.
(See \cite{Zung_Birkhoff2005,Zung_Integrable2016,Zung_AA2018} and references therein). A general \textit{conservation law} studied by Zung 
\cite{Zung_AA2018} says that ``anything'' (conformally) preserved by a 
dynamical system is automatically (conformally) 
preserved by its intrinsic associated torus actions. The following lemma is just a particular case of this general conservation law:

\begin{lemma}
\label{lem2.1}
Let $X^{S}$ be the (intrinsic) semisimple part of $X$. If $\omega$ is any formal or analytic differential 2-form such that $\cL_{X}\omega=\omega$ then $\cL_{X^{S}}\omega=\omega$. 
\end{lemma}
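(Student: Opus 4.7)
The plan is to recognise $\cL_X = \cL_{X^S} + \cL_{X^N}$ as the Jordan decomposition of $\cL_X$ acting on formal $2$-forms, and to invoke uniqueness of that decomposition on each jet to conclude that the $1$-eigenvector $\omega$ of $\cL_X$ must lie in the $1$-eigenspace of the semisimple part $\cL_{X^S}$.

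Concretely, I first work in a formal coordinate system $(x_1,\ldots,x_{2n})$ in which $X^S=\sum_i\lambda_i x_i\dfrac{\partial}{\partial x_i}$ is diagonal, so that $\cL_{X^S}$ acts diagonally on each monomial $2$-form $x^{\alpha}dx_i\wedge dx_j$ with eigenvalue $\sum_k\alpha_k\lambda_k+\lambda_i+\lambda_j$. I then decompose
\begin{equation*}
\omega=\sum_{\mu}\omega_{\mu}
\end{equation*}
into eigencomponents of $\cL_{X^S}$ and aim to prove $\omega_{\mu}=0$ for every $\mu\neq 1$. Since $[X^S,X^N]=0$ gives $[\cL_{X^S},\cL_{X^N}]=0$, the operator $\cL_{X^N}$ preserves each $\omega_{\mu}$, and separating the identity $\cL_X\omega=\omega$ by eigencomponent yields
\begin{equation*}
(\mu-1)\omega_{\mu}+\cL_{X^N}\omega_{\mu}=0\qquad\text{for every }\mu.
\end{equation*}

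Next I would argue componentwise in the polynomial degree of the coefficient functions. Writing $X^N=X^n+\sum_{k\ge 2}X^{(k)}$, the operator $\cL_{X^n}$ preserves this degree while each $\cL_{X^{(k)}}$ strictly raises it by $k-1$. Assume by contradiction that $\mu\neq 1$ and $\omega_{\mu}\neq 0$, and let $d_0$ be the smallest degree with $\omega_{\mu}^{(d_0)}\neq 0$; extracting the degree-$d_0$ part of the identity above leaves
\begin{equation*}
(\mu-1)\omega_{\mu}^{(d_0)}+\cL_{X^n}\omega_{\mu}^{(d_0)}=0,
\end{equation*}
because no $\cL_{X^{(k)}}$-contribution can reach degree $d_0$ from a non-vanishing lower-degree piece. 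Since $\cL_{X^n}$ is nilpotent on the finite-dimensional space of degree-$d_0$ polynomial $2$-forms, the endomorphism $\cL_{X^n}+(\mu-1)I$ is invertible, forcing $\omega_{\mu}^{(d_0)}=0$, a contradiction. Hence $\omega=\omega_1$, and $\cL_{X^S}\omega=\omega$.

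The hard part will be verifying the nilpotency of $\cL_{X^n}$ on each homogeneous-degree piece, which is what legitimises the inversion just used. This reduces to linear algebra: a nilpotent endomorphism of $V^{*}$ induces nilpotent operators on all symmetric and exterior powers, and the Leibniz rule writes $\cL_{X^n}$ on degree-$d_0$ polynomial $2$-forms as the sum of two commuting nilpotent operators on $\mathrm{Sym}^{d_0}(V^{*})\otimes\Lambda^{2}V^{*}$. The same argument applies in the analytic category, since $\cL_{X^S}\omega=\omega$ is an identity of formal Taylor expansions at $O$, and the Taylor series of $\omega$ uniquely determines it as either a formal or an analytic $2$-form.
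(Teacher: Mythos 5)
Your argument is correct, and it rests on the same pillars as the paper's proof: pass to a formal normalizing coordinate system where $X^S=X^s$ is diagonal, filter by the polynomial degree of the coefficients, and exploit that on each finite-dimensional space of homogeneous $2$-forms the operator $\cL_{X^n}$ is nilpotent and commutes with the diagonal operator $\cL_{X^s}$, while the higher-order terms $\cL_{X^{(k)}}$ strictly raise the degree. The difference is purely organizational: the paper runs an induction on the homogeneous degree $m$ of $\omega$, shows that the defect $\cL_{X^s}\omega_m-\omega_m$ is a fixed vector of $\cL_{X^{(1)}}$ and hence of its semisimple part $\cL_{X^s}$, and concludes $(c_j-1)^2=0$ for each monomial eigenvalue $c_j$; you instead decompose $\omega$ into $\cL_{X^S}$-eigencomponents up front and kill each component with $\mu\neq 1$ at its lowest degree by inverting $\cL_{X^n}+(\mu-1)I$. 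Both are instances of the same linear-algebra fact --- a $1$-eigenvector of a commuting semisimple-plus-nilpotent sum is a $1$-eigenvector of the semisimple summand --- so I would classify your route as essentially the paper's, recast as a direct Jordan-decomposition argument rather than an induction. One small merit of your version is that you verify explicitly the nilpotency of $\cL_{X^n}$ on $\mathrm{Sym}^{d}(V^*)\otimes\Lambda^2 V^*$, a point the paper uses only implicitly when it passes from the fixed-point equation for $\cL_{X^{(1)}}$ to the one for $\cL_{X^s}$.
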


\begin{proof}
For the self-containment, let us provide a proof here. We can suppose that $X$ is in normal form, i.e., its semisimple part $X^{S}$ is also the semisimple part of of its linear part $X^{s}$, and $X^{s}$ is written as $X^s=\sum_i\lambda_ix_i\frac{\partial}{\partial x_i}$ in a coordinate system $(x_1,\ldots,x_{2n})$.
Decompose $X$ and $\omega$ into homogeneous terms, using the following notation:
\begin{equation}
X=X^{(1)}+X^{(2)}+\ldots,\quad
\omega =\omega_0+\omega_1+\ldots.
\end{equation}
We must have $\cL_{X^{(1)}}\omega_0=\omega_0$, which implies that $\cL_{X^s}\omega_0=\omega_0$. Suppose that $\cL_{X^s}\omega_i=\omega_i, \forall i=0,\ldots,m-1$. 
The condition $\cL_{X}\omega=\omega$ follows that
\begin{equation}\label{eq2.8}
\cL_{X^{(1)}}\omega_m+\cL_{X^{(2)}}\omega_{m-1}+\ldots+\cL_{X^{(m+1)}}\omega_0=\omega_m.
\end{equation}
Denote by $\eta=\cL_{X^{(2)}}\omega_{m-1}+\ldots+\cL_{X^{(m+1)}}\omega_0$, then $\cL_{X^s}\eta=\eta$. Taking the Lie derivative $\cL_{X^s}$ on both sides of \eqref{eq2.8}, we get
\begin{equation}\label{eq2.9}
\cL_{X^{(1)}}\cL_{X^s}\omega_m+\eta=\cL_{X^s}\omega_m
\end{equation}
Equalities \eqref{eq2.8} and \eqref{eq2.9} lead to
\begin{equation}
\cL_{X^{(1)}}\left(\cL_{X^s}\omega_m-\omega_m\right)=\cL_{X^s}\omega_m-\omega_m.
\end{equation}
Consequently,
\begin{equation}\label{eq2.11}
\cL_{X^s}\left(\cL_{X^s}\omega_m-\omega_m\right)=\cL_{X^s}\omega_m-\omega_m.
\end{equation}
Suppose that that $\omega_{m}^j$ is a monomial term of $\omega_m$ then $\cL_{X^s}\omega_m^j=c_j\omega_m^j$ for some constant $c_j$. Equality \eqref{eq2.11} forces $c_j=1$, and this implies that $\cL_{X^s}\omega_m=\omega_m$.
\end{proof}

\begin{lemma}[Equivariant Darboux theorem]
\label{lem:Darboux} 
Suppose that $X$ is in Poincar\'e-Dulac (formal or analytic) normal form and $\omega$ is a symplectic form which is conformally preserved by $X$: $\mathcal{L}_X \omega = \omega$. Then, there is a local
(formal or analytic) diffeomorphism $\phi$ which preserves $X^s$, i.e. $\phi_*X^s=X^s$ and conjugates $\omega$ with the constant two-form $\omega_0=\omega(O)$.
\end{lemma}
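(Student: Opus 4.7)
My plan is to run the Moser homotopy trick along the straight-line path of two-forms $\omega_t=(1-t)\omega_0+t\omega$, but with a carefully chosen primitive so that the resulting time-dependent vector field commutes with $X^s$. Since all $\omega_t$ agree with $\omega_0$ at $O$, they are symplectic on a common neighborhood of $O$. By Lemma \ref{lem2.1} we have $\cL_{X^s}\omega=\omega$. Extracting the degree-zero (constant) part of this identity, and using that the linear diagonal field $X^s$ preserves the Taylor grading of any form, we obtain $\cL_{X^s}\omega_0=\omega_0$ as well, and hence $\cL_{X^s}\omega_t=\omega_t$ and $\cL_{X^s}(\omega-\omega_0)=\omega-\omega_0$ for every $t$.

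The crucial step is the choice of primitive. I would take
\begin{equation*}
\beta := i_{X^s}(\omega-\omega_0),
\end{equation*}
which is formal or analytic according to the category of $\omega$ and vanishes at $O$ because $X^s(O)=0$. Cartan's formula combined with $d(\omega-\omega_0)=0$ and the conformal identity just established yields $d\beta=\cL_{X^s}(\omega-\omega_0)=\omega-\omega_0$, so $\beta$ is indeed a primitive. Moreover, since $[X^s,X^s]=0$ the operators $\cL_{X^s}$ and $i_{X^s}$ commute, giving
\begin{equation*}
\cL_{X^s}\beta = i_{X^s}\cL_{X^s}(\omega-\omega_0)=i_{X^s}(\omega-\omega_0)=\beta.
\end{equation*}

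Now define the Moser vector field $Y_t$ by $i_{Y_t}\omega_t=-\beta$, which is well posed since $\omega_t$ is nondegenerate, and $Y_t(O)=0$. A short calculation using Cartan,
\begin{equation*}
i_{[X^s,Y_t]}\omega_t=\cL_{X^s}(i_{Y_t}\omega_t)-i_{Y_t}\cL_{X^s}\omega_t=-\cL_{X^s}\beta - i_{Y_t}\omega_t = -\beta+\beta=0,
\end{equation*}
shows $[X^s,Y_t]\equiv 0$. Integrating $Y_t$ from time $0$ to $1$ produces the desired local diffeomorphism $\phi$ (existence and analyticity are standard because $Y_t$ is analytic in $(t,x)$ and vanishes at $O$; in the formal category one simply integrates order by order). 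The usual Moser identity $\tfrac{d}{dt}(\phi_t^*\omega_t)=\phi_t^*\bigl(\cL_{Y_t}\omega_t+\tfrac{d\omega_t}{dt}\bigr)=0$ gives $\phi^*\omega=\omega_0$, while $\tfrac{d}{dt}\phi_t^*X^s=\phi_t^*[Y_t,X^s]=0$ yields $\phi_*X^s=X^s$.

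The only real obstacle is the equivariance requirement: a Poincar\'e primitive produced by the standard homotopy operator would not in general satisfy $\cL_{X^s}\beta=\beta$, and without this identity the bracket $[X^s,Y_t]$ would not vanish. The intrinsic primitive $i_{X^s}(\omega-\omega_0)$, built from the vector field $X^s$ itself, is exactly what is needed to make the Moser trick equivariant; everything else is a routine verification.
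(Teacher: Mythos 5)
Your proposal is correct and follows essentially the same route as the paper's proof: the same straight-line Moser path $\omega_t=(1-t)\omega_0+t\omega$, the same equivariant primitive $\zeta=i_{X^s}(\omega-\omega_0)$, and the same verification that $i_{[X^s,Y_t]}\omega_t=0$ via the two computations of $\cL_{X^s}(i_{Y_t}\omega_t)$. The only (welcome) addition is that you explicitly justify $\cL_{X^s}\omega_0=\omega_0$ by extracting the degree-zero part of $\cL_{X^s}\omega=\omega$, a step the paper leaves implicit.
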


\begin{proof}
We first show that the diffeomorphism in Moser's path method preserves the semisimple part $X^s$. Indeed, let us 
denote by $\zeta=i_{X^s}(\omega-\omega_0)$ then 
\begin{equation}
d\zeta=\cL_{X^s}(\omega-\omega_0)-i_{X^s}d(\omega-\omega_0)=\omega-\omega_0.
\end{equation}
Now, consider the path
$\omega_t=t\omega+(1-t)\omega_0$ ($t \in [0,1]$), 
and let $Y(t)$ be the time dependent vector field defined by $i_{Y_t}\omega_t=-\zeta$. Then we have
\begin{equation}
\cL_{Y_t}\omega_t=-d\zeta=-(\omega-\omega_0)=-\frac{\partial \omega_t}{\partial t}.
\end{equation}
It implies that $\omega$ and $\omega_0$ are conjugated by the diffeomorphism which is the integration of $Y_t$. We need to prove that $Y_t$ commutes with $X^s$. Indeed, we have
\begin{align*}
\cL_{X^s}\left(i_{Y_t}\omega_t\right)=-\cL_{X^s}\zeta&=-i_{X^s}d\zeta-di_{X^s}\zeta\\
&=-i_{X^s}(\omega-\omega_0)-di_{X^s}i_{X^s}(\omega-\omega_0)=-\zeta.
\end{align*}
On the other hand, we also have
\begin{align*}
\cL_{X^s}\left(i_{Y_t}\omega_t\right)&=i_{[X^s,Y_t]}\omega_t+i_{Y_t}\cL_{X^s}\omega_t\\
&=i_{[X^s,Y_t]}\omega_t+i_{Y_t}\omega_t=i_{[X^s,Y_t]}\omega_t-\zeta.
\end{align*}
This leads to $i_{[X^s,Y_t]}\omega_t=0$. Consequently, $[X^s,Y_t]=0$.
\end{proof}

\subsection{Formal normal form of a primitive $1$-form}
We now establish a formal normal form of a primitive 1-form with the above preparations.

Suppose that in a normalizing formal complex coordinate system $(x_1,\ldots,x_{2n})$ we have that $X^{(1)}$ is in the Jordan normal form, $X^s=\sum_{i=1}^{2n}\lambda_i x_i\frac{\partial}{\partial x_i}$, $[X^s,X]=0$ and $\omega=\sum_{1\leq i<j\leq 2n}c_{ij}dx_i\wedge dx_j$. Since $\cL_{X^s}\omega=\omega$ and 
$$\cL_{\sum_{i=1}^{2n}\lambda_ix_i\frac{\partial}{\partial x_i}}\left(dx_i\wedge dx_j\right)=(\lambda_i+\lambda_j)dx_i\wedge dx_j,$$ 
we must have $\lambda_i+\lambda_j=1$ whenever $c_{ij}\neq 0$. Let us denote by 
$$ [\lambda]=\{\text{indexes } i \text{ such that } \lambda_i=\lambda\},\; ([\lambda]\subset\{1,\ldots,2n\}),$$
$$[[\lambda]]=\{\text{index pairs } (i,j) \text{ such that } i\in[\lambda], j\in[1-\lambda] \text{ or } 
j\in[\lambda], i\in[1-\lambda]  \}.$$
If $\omega$ contains a term $dx_i\wedge dx_j$ (i.e. $c_{ij}\neq 0$) then $(i,j)$ must be in  $[[\lambda]]$ for some $\lambda$. 
Hence, we can write
\begin{equation}
\omega=\sum_{k}\omega_{[[\lambda_k]]},
\end{equation}
where $\omega_{[[\lambda_k]]}$ contains only terms $dx_i\wedge dx_j$ such that $(i,j)\in [[\lambda_k]]$. Since $\omega$ is nondegenerate, $\omega_{[[\lambda_k]]}$ is also nondegenerate when we restrict it to the vector space $\mathrm{Span}\{\frac{\partial}{\partial x_i} | i\in[\lambda_k]\cup[1-\lambda_k]\}$. This allows us to reduce the study to the case with only one block $[[\lambda]]$. 

We first consider the case $\lambda\neq\frac{1}{2}$.
The two vector spaces $\mathrm{Span}\{\frac{\partial}{\partial x_i} | i\in [\lambda]\}$ and $\mathrm{Span}\{\frac{\partial}{\partial x_j}|j\in[1-\lambda]\}$ are both isotropic with respect to
\begin{equation}
\omega_{[[\lambda]]}=\sum_{(i,j)\in[[\lambda]]}c_{ij}dx_i\wedge dx_j.
\end{equation}
It follows that the cardinals of $[\lambda]$ and $[1-\lambda]$ are equal. Consequently, 
\begin{equation}
X^s_{[[\lambda]]}=\lambda E_{[\lambda]}+(1-\lambda)E_{[1-\lambda]},
\end{equation}
where $E_{[\lambda]}=\sum_{i\in[\lambda]}x_i\frac{\partial}{\partial x_i}$ and $E_{[1-\lambda]}=\sum_{j\in[1-\lambda]}x_j\frac{\partial}{\partial x_j}$ are the Euler vector fields on $\mathrm{Span}\{\frac{\partial}{\partial x_i} | i\in [\lambda]\}$ and $\mathrm{Span}\{\frac{\partial}{\partial x_j}|j\in[1-\lambda]\}$ respectively. Suppose that $[\lambda]=\{1,\ldots,s\}$ and $[1-\lambda]=\{s+1,\ldots,2s\}$. We can fix the coordinates $x_1,\ldots,x_s$ and change $x_{s+1},\ldots,x_{2s}$ by linear combinations such that 
\begin{equation}
\omega_{[[\lambda]]}(\frac{\partial}{\partial x_i},\frac{\partial}{\partial x_{j+s}})=\delta_{ij},\,\forall i,j=1,\ldots s.
\end{equation}
In this new coordinate system, we have
\begin{equation}
\omega_{[[\lambda]]}=\sum_{i=1}^s dx_i\wedge dx_j,
\end{equation}
\begin{equation}
X^s_{[[\lambda]]}=\lambda \sum_{i=1}^sx_i\frac{\partial}{\partial x_i}+(1-\lambda) \sum_{i=1}^sx_{i+s}\frac{\partial}{\partial x_{i+s}}.
\end{equation}
Denote by $X^{nil}$ the linear nilpotent terms and $X^{nil}_{[[\lambda]]}$ its restriction to the block $[[\lambda]]$. Then by the construction of the coordinates $(x_1,\ldots,x_{2s})$, we can write $X^{nil}_{[[\lambda]]}$ as follows: there exist positive integers $t_1=1<t_2<\cdots<t_{\ell+1} \leq s$ such that
\begin{equation}
\label{nil}
X^{nil}_{[[\lambda]]}=\sum_\ell\sum_{i=t_\ell}^{t_{\ell+1}-1}x_{i+1}\frac{\partial}{\partial x_i}+\sum_{i,j=s+1}^{2s}e_{ij}x_j\frac{\partial}{\partial x_i},\; 1\leq t\leq s-1,
\end{equation}
where $e_{ij}$ are constant coefficients such that the $s$ by $s$ matrix $(e_{ij})$ is nilpotent.

Assume $Y$ is a homogeneous vector field such that 
$\cL_{X^s}Y=0$ and $\cL_{Y}\omega=0$, then $i_Y\omega=-dH$ for some Hamiltonian function $H$. We have 
\begin{equation}
\cL_{X^s}(-dH)=\cL_{X^s}(i_Y\omega)=i_{[X^s,Y]}\omega + i_Y(\cL_{X^s}\omega)=-dH.
\end{equation}
This implies that $X^s(H)=H$. Therefore, $H$ contains only the monomials $\prod x_i^{\alpha_i}$ satisfying the following resonance relation
\begin{equation}
\sum_{i=1}^{2n}\lambda_i\alpha_i=1.
\end{equation}
By this observation, we must have 
\begin{equation}
i_{X^{nil}_{[[\lambda]]}}\omega_{[[\lambda]]}=-dQ
\end{equation}
for some quadratic function $Q$. Moreover, by simple computations we obtain that for quadratic monomials $Q_{ij}=x_ix_j,\, Q_{i,j+s}=x_ix_{j+s},\, Q_{i+s,j+s}=x_{i+j}x_{j+s}$ for $1\leq i,j\leq s$, their corresponding vector fields read respectively
\begin{equation}
\begin{aligned}
&X_{x_ix_j}=x_i\frac{\partial}{\partial x_{j+s}}+x_j\frac{\partial}{\partial x_{i+s}},\\
&X_{x_{i+s}x_{j+j}}=-x_{i+s}\frac{\partial}{\partial x_{j}}-x_{j+s}\frac{\partial}{\partial x_{i}},\\
&X_{x_ix_{j+s}}=-x_i\frac{\partial}{\partial x_{j}}+x_{j+s}\frac{\partial}{\partial x_{i+s}}.
\end{aligned}
\end{equation}
When $\lambda\neq\frac{1}{2}$, the quadratic monomials like $Q_{i,j+s}$ is the only choice among the three types due to the resonance relation.  
Comparing to \eqref{nil}, we get 
\begin{equation}
Q=\sum_{i=t_\ell}^{t_{\ell+1}-1} x_{i+1}x_{s+i}\quad \text{and} \quad 
X^{nil}_{[[\lambda]]}=\sum_{i=t_\ell}^{t_{\ell+1}-1}x_{i+1}\frac{\partial}{\partial x_i}-x_{s+i}\frac{\partial}{\partial x_{s+i+1}}
\end{equation}
in each sub-block.

In the case $\lambda=\frac{1}{2}$, the cardinal of $[\frac{1}{2}]$ must be even, still assumed to be $2s$. Moreover, we have
\begin{equation}
X^s_{[\frac{1}{2}]}=\frac{1}{2}E_{[\frac{1}{2}]}=\frac{1}{2}\sum_{i\in {[\frac{1}{2}]}}x_i\frac{\partial}{\partial x_i}.
\end{equation}
By a linear change of coordinates, $\omega_{[\frac{1}{2}]}$ can be put into the canonical form in this block and $X^s_{[\frac{1}{2}]}$ is still equal to   $\frac{1}{2}E_{[\frac{1}{2}]}$. Still by comparing the corresponding vector fields of the three types of quadratic monomials with \eqref{nil}, we conclude that such a quadratic function should be in one of the following forms:
\begin{equation}
\begin{aligned}
     &Q_{t_\ell}=x_{t_\ell+s}^2 \text{ with } t_{\ell+1}=t_\ell+1;\\
	 &Q_{t_\ell}=2\sum_{j=t_{\ell}}^{t_{\ell+1}-2}x_jx_{s+j+1}+(-1)^{t_{\ell+1}-t_\ell}x_{t_{\ell+1}-1}^2;\\
	 &Q_{t_\ell}=2\sum_{j=t_\ell}^{t_{\ell+1}-2}x_jx_{s+j+1} \text{ with } t_{\ell+1}-t_\ell\geq 3 \text{ and is an odd number}.
\end{aligned}
\end{equation}


\QED

\subsection{Analytic normal form}
If the singular contact form is in the analytic category, a natural but difficult question is whether  the formal normalization converges. 
Notice that the Moser's path method preserves the analyticity, so we can immediately conclude that if $X$ has a convergent transformation to its normal form, then any analytic primitive 1-form admitting $X$ as the associated conformal vector field 
is also analytically normalizable. One can refer to \cite{Walcher} for a survey which contains the main criteria for the convergence problem.  

Here we now recall two conditions named after Bruno on analytic vector fields.
\begin{definition}
\label{def:Bruno}
Suppose $X$ is an analytic vector field on $\mathbb C^n$ having $0$ as an equilibrium point. Denote by $X^{(1)}$ its linear part. We say it satisfies 
\begin{itemize}
 \item\textbf{condition} $\boldsymbol\omega$, if the eigenvalues $(\lambda_1,\ldots,\lambda_n)$ of $X^{(1)}$ satisfies 
$$\sum_{k=1}^\infty 2^{-k}\ln\frac{1}{\omega_k}<\infty \quad\mbox{where}\quad \omega_k:=\min_{\substack{|\ell_1+\cdots+\ell_n|<2^k\\r=1,\ldots,n}}\set{|\sum_{s=1}^n\ell_s\lambda_s-\lambda_r|\neq0:\ell_s\in\N}$$
 \item \textbf{condition} $\mathbf A$, if it has a formal normal form like $f\cdot X^{(1)}$ for some function $f$
\end{itemize} 
\end{definition}

Bruno proved (see {\cite{Bruno_Book1989}}) that if an analytic vector field satisfies both condition $\omega$ and condition $A$, then it admits a convergent transformation to its normal form. It gives us the analytic part of Theorem \ref{thm:PrimitiveNF}.

In particular, in \textit{\textbf{the nonresonant case}}, if the the vector field satisfies the well-known Siegel's Diophantine condition, then the primitive form is analytically linearizable. Such analytic linearization results under Siegel's condition are already given for singular contact forms in  \cite{Zhito-1Form1992}, or for certain singular first order PDEs in \cite{Webster-1stOrder1987}.

\begin{remark}
A similar problem of simultaneous normalization of a pair $(\Pi,X)$, where $\Pi$ is a linear Poisson structure and $X$ is a vector field such that $\cL_X\Pi=\Pi$, was study in \cite{Mx_PoissonVF2006} by similar method.
\end{remark}

\subsection{Smooth normal form}
In this subsection, we pay attention to the generic case where the Liouville vector field $X$ is smooth ($C^\infty$) and hyperbolic, i.e., none of the eigenvalues of $dX(0)$ lies on the imaginary axis. By the classical Sternberg-Chen theorem, $X$ can be smoothly normalized to the Poincar\'e-Dulac normal form. 

However, the statement of Lemma \ref{lem2.1} does not hold any more if we directly replace formal vector fields by smooth hyperbolic vector fields, thus the argument of Lemma \ref{lem:Darboux} does not work neither.
Hence, the fact that $X$ is in the Poincar\'e-Dulac normal form does not directly imply the smooth normal form of the primitive form $\gamma$ as in the analytic case.
\begin{example}
  Consider the smooth hyperbolic vector field 
$$X=(2+3xy^2)x\frac{\partial}{\partial x}-(1+2xy^2)y\frac{\partial}{\partial y}$$ 
on $\mathbb R^2$. It is already in the Poincar\'e-Dulac normal form and it is a Liouville vector field of the standard symplectic form $\omega_0:=\mathrm dx\wedge\mathrm dy$ on $\mathbb R^2$. As $X$ is in the Siegel domain (i.e., $0$ is in the convex hull of the eigenvalues of the linear part of $X$), we can find a non-constant smooth function having vanishing infinite jet $j_0^\infty F$ at $0$ such that $X(F)=0$. Therefore, $X$ is a Liouville vector field of $\omega_1:=(1+F)\omega_0$ since $$\mathcal L_X \omega_1=X(1+F)\omega_0+(1+F)\mathcal L_X \omega_0=(1+F)\mathcal L_X \omega_0=\omega_1.$$ 
On the other hand, $X^s=2x\frac{\partial}{\partial x}-y\frac{\partial}{\partial y}$ is not a Liouville vector field of $\omega_1$. Otherwise, $X^s$ should preserve $(1+F)$ and therefore $F$ is a function of $xy^2$, which implies that $xy^2$ is a first integral of $X$. This gives the contradiction.
\end{example}

Fortunately, the smooth conjugacy for hyperbolic vector field is not unique due to the Sternberg-Chen theorem and we can find an alternative normalization which conjugates both the vector field and the symplectic form, by the following three assertions.

i) We can still use Moser's path method to conjugate the symplectic form $\omega$ to its constant part $\omega_0=\omega(0)$. Let $\phi_t$ be a family of diffeomorphisms such that $\phi_t^*\omega_t=\omega_0$ and $Y_t$ be the infinitesimal of $\phi_t$. Moreover, $Y_t$ can be chosen such that it formally commutes with the semi-simple part of the Liouville vector field $X$. Therefore, $\phi_1^*X$ is still formally in the Poincar\'e-Dulac normal form, still denoted by $X$ for simplicity of notations.

ii) We claim that there exsits a smooth Liouville vector field $\tilde X$ of $\omega_0$ having infinite contact with $X$ in formal normal form such that $\tilde X$ is genuinely in Poincar\'e-Dulac normal form. In fact, as $i_{X}\omega_0$ is a sum of its linear part $\beta_1$ and a differential form $\mathrm dH$ of some Hamiltonian function $H$ satisfying $X^s(H)=H$ in the formal sense, there exists a smooth function $\tilde H$ having infinite contact with $H$  by the Borel theorem such that $X^s(\tilde H)=\tilde H$. We define $\tilde X$ by $i_{\tilde X}\omega_0=\beta_1+\mathrm d\tilde H$. $\tilde X$ is a Liouville vector field and it is in Poincar\'e-Dulac normal form since
\begin{align*}
\cL_{\tilde X^s}\left(i_{\tilde X}\omega(0)\right)&=\cL_{\tilde X^s}(\beta_1+\mathrm d\tilde H)\\
&=\cL_{\tilde X^s}\beta_1+\mathrm d\cL_{\tilde X^s}\tilde H=\beta_1+\mathrm d\tilde H.
\end{align*}
In other hand, we also have
\begin{align*}
\cL_{\tilde X^s}\left(i_{\tilde X}\omega(0)\right)&=i_{[\tilde X^s,\tilde X^s]}\omega(0)+i_{\tilde X}\mathcal L_{\tilde X^s}\omega(0)\\
&=i_{[\tilde X^s,\tilde X]}\omega(0)+i_{\tilde X}\omega(0)=i_{[\tilde X^s,\tilde X]}\omega(0)+\beta_1+\mathrm d\tilde H.
\end{align*}

iii) We now apply a theorem in \cite{Manouchehri, Chaperon1999} saying that two smooth hyperbolic Liouville vector fields $X,\tilde X$ of a symplectic form $\omega$ are conjugate by a symplectic diffeomorphism if and only if they are formally conjugate. 

Here we give a direct proof. For any Liouville vector field $X$, the pullback $(\phi_X^t)^*\omega$ of $\omega$ by the flow $\phi_X^t$ of $X$ is just $\mathrm e^t\omega$, thus $(\phi_{\tilde X}^{-t}\circ\phi_X^t)^*\omega=\omega$ for all $t$. Notice that the conjugacy between $X$ and $\tilde X$ can be realized by a diffeomorphism of the form $\phi_{\tilde X}^{-t(p)}\circ\phi_X^{t(p)}(p)$ due to the Sternberg-Chen theorem. Hence the conjugacy does preserve the symplectic form.

We have completed the proof of the smooth part of theorem \ref{thm:PrimitiveNF}. We would like to point out that in \textbf{\textit{the nonresonant case}}, $X$ is automatically hyperbolic. By Sternberg's linearization theorem \cite{Ste}, we can assume that $X$ is linear. Therefore, we can linearize the primitive form $\gamma$.

\section{Normalization of singular contact forms}

\subsection{Transversal singularities}
We assume that $O$ is a transversal nondegenerate singularity. Then define a function $h$ on a small neighborhood $\mathcal{U}(O)$ of the origin $O$ as follows:
\begin{itemize}
\item{} $h=0$ on $N = \{x: \alpha \wedge (d\alpha)^n (x) = 0 \}$.
\item{} For $x\not\in N$, then $x$ lies on the integral curve $\Gamma$ of $Z$ from $y\in N$ to $x$, we put
\begin{equation}
h(x)=\int_{\Gamma}\alpha.
\end{equation}
\end{itemize} 
Then $h$ is a Morse function on each integral curve of $Z$. Since $h$ is transversally Morse (Morse-Bott), there is a coordinate function which we also denote by $\theta$ such that $\theta=0$ on $N$ and $h=\pm\theta^2/2$. Let us write the form $\alpha$ as
\begin{equation}
\alpha=\pm\theta d\theta+\gamma.
\end{equation} 
Since $\gamma(\frac{\partial}{\partial\theta})=(\alpha-dh)(\frac{\partial}{\partial\theta})=0$ and $d\beta(\frac{\partial}{\partial\theta})=d\alpha(\frac{\partial}{\partial\theta})=0$, $\gamma$  is a basic $1$-form, i.e. a $1$-form depends only on the coordinates $x_1,\ldots,x_{2n}$. Hence, the presymplectic form $d\alpha$ can be projected to the hyperplane $\{\theta=0\}$ and therefore $\gamma$ is indeed a primitive form  on the hyperplane. 

Thus we have finished the proof of the transversal part of theorem \ref{thm:prenormalization}. Together with Theorem \ref{thm:PrimitiveNF}, it gives us (formal, smooth or analytic) normalization of $\alpha$
in the transversal nondegenerate singular case. In particular, when the associated conformal vector field of $\gamma$ is non-resonant, we recover the following linear normalization for $\alpha$ (which is formal, or smooth or analytic provided that a Diophantine condition is satisfied), which was obtained before by Zhitomirskii 
\cite[Entry~10~of~Table~14.2]{Zhito-1Form1992}:
\begin{equation}
\alpha = \theta d \theta + \sum_{i=1}^{n}\left(\lambda_{n_i} x_{j}dx_{n+j}+(\lambda_{n_i}-1) x_{n+j}dx_{j}\right).
\end{equation}
The case when $\gamma$ is resonant was not studied in  
\cite{Zhito-1Form1992}. In this case, a priori the normal form for $\alpha$ must contain resonant nonlinear terms, and reads 
\begin{equation}
\alpha=\pm\theta d\theta+\gamma_0.
\end{equation} 
where $\gamma_0$ is a resonant normal form of $\gamma$ and takes one of the expressions presented in Theorem \ref{thm:PrimitiveNF}. 

\subsection{Tangent singularities}

In this subsection, we consider the case when $O$ is a tangent nondenegerate singularity of a smooth or analytic
singular contact 1-form $\alpha$.
We will assume that the tangency of 
$N = \{x: \alpha \wedge (d\alpha)^n (x) = 0 \}$ 
with the 1-dimensional 
kernel foliation of the presymplectic form $d\alpha$ at $O$ 
is generic, i.e. is of order $2$. We will now prove the tangent case of
the pre-normalization theorem \ref{thm:prenormalization} under this
generic tangency assumption. We will divide the pre-normalization
into four steps, and write one lemma for each step.

Using the Weierstrass preparation theorem in the analytic case, or the Malgrange preparation theorem in smooth case (see \cite{Malgrange_Ideals}), we can assume that
\begin{equation}
N=\{\theta^2=x_1\}
\end{equation}
in a local coordinate system $(\theta, x_1, \hdots, x_{2n})$
such that the kernel of $d\alpha$ is generated by 
$Z=\dfrac{\partial}{\partial\theta}$.
Then $\alpha$ can be written as
\begin{equation}
\alpha=(\theta^2-x_1)hd\theta+\sum_{i=1}^{2n}g_idx_i
\end{equation}
with $h(O)\neq 0$.

Let us restrict our attention to a plane 
$P = P_{c_2, \hdots,c_{2n}} = \{x_2=c_2,\ldots,x_{2n}=c_{2n}\}$, 
where $c_2,\ldots,c_{2n}$ are constant. For simplicity of notations,
redenote $x_1$ by $x$.

\begin{lemma}
Put $\beta=h(x,\theta)(\theta^2-x)d\theta$. There exists a smooth curve $\ell= \{\theta= x g(x) \}$ on $P$, where $g$ is a local smooth function, 
which is analytic in the analytic case,  such that
\begin{equation}
\label{eqn:ThetaCurve}
\int_{\theta = -\sqrt{x}}^{\theta =x g(x)}\beta=\int_{\theta = x g(x)}^{\theta = \sqrt{x}}\beta
\end{equation}
for all $x \geq 0$ sufficiently small in the real case
(for all $x$ near 0 in the complex holomorphic case), where the two
integrals are taken on the line $\{x\ \text{fixed}\}$.
\end{lemma}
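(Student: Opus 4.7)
The plan is to reduce condition \eqref{eqn:ThetaCurve} to a single scalar equation $\Psi(x,s)=0$ in which the ansatz $t=xs$ has already been built in and the dominant power $x^2$ has been scaled out on both sides, so that the implicit function theorem applies at $x=0$. The main obstacle is that the naive IFT is degenerate at $(x,t)=(0,0)$ for the original equation: both the equation and its $t$-derivative vanish there, so some rescaling is indispensable; the symmetric nature of the interval $[-\sqrt x,\sqrt x]$ together with the factor $(\theta^2-x)$ is what will eventually make the $\sqrt x$-contributions conspire away.

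First I would split $h$ into its even and odd parts in $\theta$, writing $h(x,\theta)=A(x,\theta^2)+\theta B(x,\theta^2)$ with $A,B$ smooth (analytic) in their two arguments; this is standard, since every smooth even function of $\theta$ is a smooth function of $\theta^2$. Using the evenness of $A(x,\theta^2)(\theta^2-x)$ and the oddness of $\theta B(x,\theta^2)(\theta^2-x)$ in $\theta$, a routine computation over the symmetric interval $[-\sqrt x,\sqrt x]$ causes all the $\sqrt x$-pieces to cancel in the difference of the two sides of \eqref{eqn:ThetaCurve}, and reduces it to the equivalent condition
\begin{equation}
\int_0^{t} h(x,\theta)(\theta^2-x)\,d\theta=\tfrac12\int_0^{x} B(x,u)(u-x)\,du,
\end{equation}
after the substitution $u=\theta^2$ on the right. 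This form no longer involves $\sqrt x$, so it has the same meaning in the real and complex settings.

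Next, I would plug in the ansatz $t=xs$ and rescale $\theta=xr$ on the left and $u=xv$ on the right; both sides then factor as $x^2$ times an integral smooth in $(x,s)$, and after dividing by $x^2$ the equation becomes
\begin{equation}
\Psi(x,s):=\int_0^s h(x,xr)(xr^2-1)\,dr-\tfrac12\int_0^1 B(x,xv)(v-1)\,dv=0,
\end{equation}
with $\Psi$ smooth (analytic) in $(x,s)$ in a full neighborhood of $(0,s_0)$, where $s_0=B(0,0)/(4h(0,0))$ solves the limiting equation $-h(0,0)s=-\tfrac14 B(0,0)$. Since $\partial_s\Psi(0,s_0)=-h(0,0)\neq 0$ by the hypothesis $h(O)\neq 0$, the implicit function theorem delivers the desired smooth (resp.\ analytic) function $g$ with $g(0)=s_0$ satisfying $\Psi(x,g(x))\equiv 0$; smooth dependence on the constants $c_2,\dots,c_{2n}$ defining $P$ is inherited from the parametric version of the IFT, so we in fact obtain a smooth surface $\theta=xg(x;c_2,\dots,c_{2n})$ which restricts to the required curve $\ell$ on each plane $P$.
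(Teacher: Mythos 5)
Your proposal is correct and follows essentially the same route as the paper: decompose $h$ into even and odd parts in $\theta$ so that the symmetric contributions cancel, rescale $\theta=xr$ (and $t=xs$) to pull out the factor $x^2$, and apply the implicit function theorem using $\partial_s\Psi(0,s_0)=-h(0,0)\neq 0$. The only cosmetic difference is that you obtain the $x^2$ factor on the right-hand side by the explicit substitution $u=xv$, where the paper instead writes $F(x)=2x^2G(x)$ via $F(0)=F'(0)=0$ and Hadamard's lemma; the resulting scalar equation is the same.
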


\begin{figure}[!ht]
\includegraphics[width=0.6\textwidth]{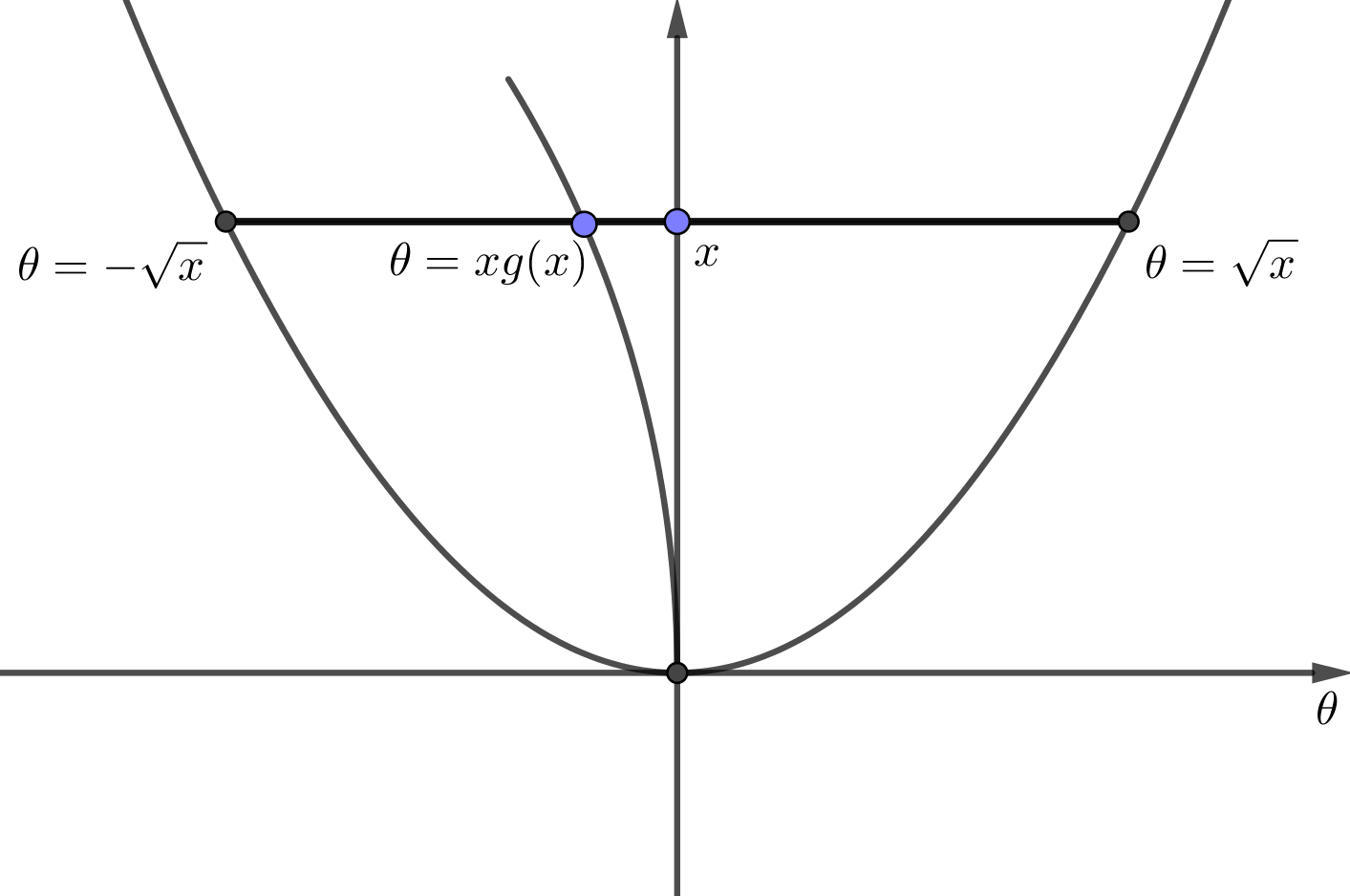}
\end{figure}

\begin{proof}
Denote
\begin{equation*}
F(x)=\int_{0}^{\sqrt{x}}\beta-\int_{-\sqrt{x}}^{0}\beta
\end{equation*}
(where the integrals are taken on the line $\{x\ \text{fixed}\}$).
We decompose $h(x,\theta)$ into the sum of odd and even functions on $\theta$:
\begin{equation}
h(x,\theta)=h_1(x,\theta^2)+\theta h_2(x,\theta^2).
\end{equation}
Then, $F$ can be written as
\begin{equation*}
F(x)=2\int_{0}^{\sqrt{x}}\theta h_2(x,\theta^2)(\theta^2-x)d\theta=\int_{0}^{x} h_2(x,\theta')(\theta'-x)d\theta'
\end{equation*}
(by the change of variable $\theta' = \theta^2$).
In particular, $F(x)$ is smooth. Moreover, we have
\begin{equation*}
\dfrac{dF(x)}{dx} =\int_{0}^{x} \frac{\partial h_2}{\partial x}(x,\theta')(\theta'-x)- h_2(\theta'-x)d\theta'.
\end{equation*} 
This implies that the derivative of $F$ at $0$ is $0$, therefore
we can write $F(x)=2x^2G(x)$, where $G(x)$ is smooth (analytic
in the analytic case).

Now, for any given function $y = g(x)$, put
\begin{equation*}
P(x,y)=\int_{0}^{xy} \beta=\int_{0}^{x y} h(x,\theta)(\theta^2-x)d\theta.
\end{equation*}
By taking $z = \theta/x \in[0,y]$ as the new variable 
for the integration, we get
\begin{equation*}
P(x,y)=x^2\int_{0}^{y} h(x,xz)(xz^2-1)dz.
\end{equation*}
Consider 
\begin{equation*}
S(x,y)=\int_{0}^{y} h(x,xz)(xz^2-1)dz-G(x).
\end{equation*}
We have $S(0,\frac{G(0)}{h(0,0)})=0$ and $\frac{\partial S}{\partial y}(0,\frac{G(0)}{h(0,0)})=-h(0,0)\neq 0$. By the implicit function theorem, there is a smooth function $g(x)$ in a neighborhood of $0$ such that $S(x,g(x))=0$ and $g(0)=\frac{G(0)}{h(0,0)}$. It implies that $F(x)=2P(x,g(x))$ (for all $x \geq0$ small enough in the real case, and all $x$ small enough in the holomorphic case). Equality \eqref{eqn:ThetaCurve} follows immediately from that. 
\end{proof}

Consider the diffeomorphism $\phi$ in a neighborhood of $(x,\theta)=(0,0)$, which moves the curve $\ell=\{\theta=xg(x)\}$ 
to the axis $\{\theta=0\}$, whose inverse map is defined as follows:
\begin{equation}
\phi^{-1}(x,\theta)=(x,xg(x)+\theta(1-\theta g(\theta^2))).
\end{equation}  
Doing it parameter-wise, for every plane $P_{c_2, \hdots,c_{2n}}$
we get a local diffeomorphism $\phi$ of the space which fixes the points of $N=\{\theta^2=x\}$, sends each curve $\ell$ to the axis $\theta=0$ on the corresponding plane $P_{c_2, \hdots,c_{2n}}$, and does not change
the direction of $\dfrac{\partial}{\partial \theta}$. After applying
this coordinate transformation $\phi$, we can assume that 
\begin{equation}
\int_{-\sqrt{x}}^{0}\beta=\int_{0}^{-\sqrt{x}}\beta
\end{equation}
on every plane $P_{c_2, \hdots,c_{2n}}$. 

Remark that, in the above formulas, one can replace $\beta$ by $\alpha$, the results will remain the same, just keep in mind that the integration is taken on the
curves generated by the kernel of $d\alpha$.

\begin{lemma} On each plane $P_{c_2, \hdots,c_{2n}}$
there is a new local coordinate system $(\eta,y)$, where $y=y(x)$,$\eta=\eta(x,\theta)$, such that $N=\{\eta^2=x\}$, $\ell=\{\eta=0\}$ and 
\begin{equation}
\label{eq1.5}
\int_{0}^{\sqrt{y}}\beta=\int_{0}^{\sqrt{y}}(\eta^2-y)d\eta
\end{equation}
(for all $y\geq 0$ sufficiently small in the real case, and all
$y$ near 0 in the complex case).
\end{lemma}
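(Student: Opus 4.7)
The plan is to determine the new coordinate $y=y(x)$ by inverting the desired integral identity, and then take $\eta$ to be $\theta$ rescaled by a smooth $x$-dependent factor chosen so that the boundary requirements $\eta|_{\ell}=0$ and $\eta|_{N}=\pm\sqrt{y}$ are automatically matched. Once $y$ is defined by the integral condition, identity \eqref{eq1.5} holds for free; the genuine work is to show that $y$ is smooth (resp.\ analytic) and gives a valid change of variable.

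First I would decompose $h(x,\theta)=h_1(x,\theta^2)+\theta\,h_2(x,\theta^2)$ into its $\theta$-even and $\theta$-odd parts. Substituting $\theta\mapsto-\theta$ in the symmetrization identity $\int_{-\sqrt{x}}^{0}\beta=\int_{0}^{\sqrt{x}}\beta$ established in the previous lemma forces
\begin{equation*}
\int_{0}^{\sqrt{x}}\theta\,h_2(x,\theta^2)(\theta^2-x)\,d\theta=0,
\end{equation*}
so the odd part contributes nothing to $I(x):=\int_0^{\sqrt{x}}\beta$. The substitution $\theta=\sqrt{x}\,u$ then yields
\begin{equation*}
I(x)=x^{3/2}K(x),\qquad K(x):=\int_0^1 h_1(x,xu^2)(u^2-1)\,du,
\end{equation*}
which is smooth (resp.\ analytic) in $x$ alone and satisfies $K(0)=-\tfrac{2}{3}h(0,0)\neq 0$ since $h(0,0)\neq 0$.

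Since $\int_0^{\sqrt{y}}(\eta^2-y)\,d\eta=-\tfrac{2}{3}y^{3/2}$, identity \eqref{eq1.5} reduces to $-\tfrac{2}{3}y(x)^{3/2}=x^{3/2}K(x)$, which I would solve as
\begin{equation*}
y(x)=\bigl(-\tfrac{3}{2}K(x)\bigr)^{2/3}x=c(x)\,x.
\end{equation*}
Because $K$ is smooth and non-vanishing at $0$, the factor $c(x)$ is a well-defined smooth (in the analytic case, a local holomorphic branch of the cube root) positive function near $0$; hence $y$ is smooth, $y(0)=0$ and $y'(0)\neq 0$, so $x\mapsto y$ is a local diffeomorphism. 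I then set $\eta(x,\theta):=\sqrt{c(x)}\,\theta$, which is smooth since $c>0$. One checks immediately that $\eta=0\Leftrightarrow\theta=0$ (so $\ell=\{\eta=0\}$), that $\eta(x,\pm\sqrt{x})=\pm\sqrt{y(x)}$ (so $N=\{\eta^2=y\}$), and that \eqref{eq1.5} holds by construction. Performing this pointwise in the parameters $(c_2,\ldots,c_{2n})$ produces the required coordinate change on a whole neighborhood of $O$.

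The main obstacle is the smoothness of $K$---equivalently, the clean $x^{3/2}$-factorization of $I(x)$---since otherwise $\bigl(-\tfrac{3}{2}I(x)\bigr)^{2/3}$ would acquire a fractional-power singularity at $0$ and $y$ could not serve as a coordinate. This factorization is made possible precisely by the symmetrization step that eliminates the $\theta$-odd contribution of $h$; ordering the lemmas in this way is therefore essential.
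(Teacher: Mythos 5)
Your proposal is correct and follows essentially the same route as the paper: use the symmetrization from the previous lemma to kill the $\theta$-odd part of $h$, extract the factor $x^{3/2}$ from $\int_0^{\sqrt{x}}\beta$ with a smooth nonvanishing cofactor, and then define $y$ as $x$ times the $2/3$ power of that cofactor and $\eta$ as $\theta$ times its $1/3$ power. The only (cosmetic) difference is that you obtain the smooth factorization $I(x)=x^{3/2}K(x)$ directly via the substitution $\theta=\sqrt{x}\,u$, whereas the paper deduces it from the oddness and vanishing derivatives of $Q(z)=\int_0^z h_1(z^2,\theta^2)(\theta^2-z^2)\,d\theta$; your version is, if anything, slightly cleaner.
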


\begin{proof}
Equality \eqref{eq1.5} is equivalent to 
\begin{equation}\label{eq1.6}
\int_{0}^{\sqrt{x}}h(x,\theta)(\theta^2-x)d\theta=-\frac{2}{3}y^{3/2}.
\end{equation}
Let us decompose 
$$h(x,\theta)=h_1(x,\theta^2)+\theta h_2(x,\theta^2).$$
Since $\int_{-\sqrt{x}}^{0}\beta=\int_{0}^{\sqrt{x}}\beta$, we have
\begin{equation*}
\int_{0}^{\sqrt{x}}h(x,\theta)(\theta^2-x)d\theta=\int_{0}^{\sqrt{x}}h_1(x,\theta^2)(\theta^2-x)d\theta.
\end{equation*}
Denote by 
$$Q(z)=\int_{0}^{z}h_1(z^2,\theta^2)(\theta^2-z^2)d\theta.$$
We have $Q(-z)=-Q(z)$, $Q'(0)=0$ and $\left(\frac{Q'}{2z}\right)'(0)=-h(0,0)\neq 0$. It implies that $Q(z)=z^3(-h(0,0)+K(z^2))$ for some (smooth or analytic) function $K$. Therefore, the diffemorphism $\left( y(x)=x\left(\frac{3}{2}h(0,0)-\frac{3}{2}K(x)\right)^{2/3},
\eta(x,\theta) = \theta \left(\frac{3}{2}h(0,0)-\frac{3}{2}K(x)\right)^{1/3} \right)$ satisfies Equation \eqref{eq1.6}.
\end{proof}

Thus, we can assume, after a change of coordinates, that 
\begin{equation}\label{eq1.7}
\int_{0}^{\sqrt{x}}\beta=-\frac{2}{3}x^{3/2}, 
\end{equation}
where $\beta=h(x,\theta)(\theta^2-x)d\theta$
and $\alpha=\beta+\sum_{i=1}^{2n}g_idx_i$. From \eqref{eq1.7} it follows automatically that $h(0,0)=1$.

\begin{lemma}
With the above notations and assumptions, there is a local diffeomorphism $\phi(x,\theta)=(x,\psi(x,\theta))$ which fixes the points of $N=\{\theta^2=x\}$ and of $\ell=\{\theta=0\}$, and such that $\psi_0(\theta) := \psi(0,\theta) = \theta + \theta^3 L(\theta)$ 
(for some smooth function $L$) satisfies the 
following equation (for all $\theta$ sufficiently small):
\begin{equation}\label{eq1.8}
\int_{0}^{\theta} \beta_{|_{\{x=0\}}} := \int_{0}^{\theta}h(0,\eta)\eta^2d\eta
=\frac{(\psi_0(\theta))^3}{3}.
\end{equation}
\end{lemma}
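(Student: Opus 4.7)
The plan is to define $\psi_0(\theta)$ directly from equation~\eqref{eq1.8} by taking a cube root, then establish the claimed form $\theta+\theta^3 L(\theta)$ using the symmetry produced in the first step of the pre-normalization, and finally extend $\psi_0$ to a two-variable function $\psi(x,\theta)$ fixing both $N$ and $\ell$ pointwise.

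Let $I(\theta):=\int_0^\theta h(0,\eta)\,\eta^2\,d\eta$. Since $h(0,0)=1$, one can write $3I(\theta)=\theta^3 R(\theta)$ with $R$ smooth (resp.\ analytic) and $R(0)=1$; then $\psi_0(\theta):=\theta\, R(\theta)^{1/3}$ is smooth (resp.\ analytic) and satisfies \eqref{eq1.8} by construction. To confirm that $\psi_0(\theta)=\theta+\theta^3 L(\theta)$ I must show that the quadratic coefficient vanishes, equivalently $R(\theta)=1+O(\theta^2)$, equivalently $h(0,\eta)=1+O(\eta^2)$. With the decomposition $h(x,\theta)=h_1(x,\theta^2)+\theta\, h_2(x,\theta^2)$ this amounts to $h_2(0,0)=0$. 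This is where the symmetry $\int_{-\sqrt{x}}^{0}\beta=\int_{0}^{\sqrt{x}}\beta$ enters: after substituting $\theta\mapsto-\theta$ in the left integral, the symmetry becomes
\begin{equation}
\int_0^{\sqrt{x}} 2\theta\, h_2(x,\theta^2)(\theta^2-x)\,d\theta=0,
\end{equation}
and the change of variable $u=\theta^2$ rewrites this as $\int_0^x h_2(x,u)(u-x)\,du=0$ for all small $x$. A Taylor expansion shows that the lowest-order term is $-h_2(0,0)\,x^2/2$, which forces $h_2(0,0)=0$. Consequently $\psi_0(\theta)=\theta+\theta^3 L(\theta)$ for a smooth (resp.\ analytic) function $L$.

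To extend $\psi_0$ to the required $\psi$ I take the explicit formula
\begin{equation}
\psi(x,\theta):=\theta+\theta(\theta^2-x)\, L(\theta),
\end{equation}
which immediately satisfies $\psi(0,\theta)=\psi_0(\theta)$, $\psi(x,0)=0$ (so $\ell$ is pointwise fixed), and $\psi(\theta^2,\theta)=\theta$ (so $N$ is pointwise fixed), while $\partial_\theta\psi(0,0)=1$, so that $\phi(x,\theta)=(x,\psi(x,\theta))$ is a local diffeomorphism. In the ambient $(2n+1)$-dimensional setting the construction is carried out parameter-wise on each plane $P_{c_2,\ldots,c_{2n}}$, and the dependence of $h$, $R$ and $L$ on the transverse parameters inherits the regularity of the original $1$-form, giving the required $\phi$ on a whole neighborhood of $O$. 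The principal obstacle is precisely the vanishing of the $\theta^2$ coefficient of $\psi_0$: a generic cube-root construction would produce a nonzero quadratic term, so the lemma relies essentially on the symmetry $\int_{-\sqrt{x}}^{0}\beta=\int_{0}^{\sqrt{x}}\beta$ imposed in the first step and on the identity $h_2(0,0)=0$ it implies.
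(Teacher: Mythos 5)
Your proposal is correct and follows essentially the same route as the paper: both define $\psi_0=(3\int_0^\theta h(0,\eta)\eta^2\,d\eta)^{1/3}$, both reduce the absence of a quadratic term to $h_2(0,0)=0$, which is extracted from the symmetry $\int_{-\sqrt{x}}^{0}\beta=\int_{0}^{\sqrt{x}}\beta$, and both extend to two variables by exactly the same formula $\psi(x,\theta)=\theta+\theta(\theta^2-x)L(\theta)$ (the paper writes this as $\psi_0(\theta)+x\theta\psi_1$ with $\psi_1=-L(\theta)$). The only difference is cosmetic: you obtain $h_2(0,0)=0$ by the substitution $u=\theta^2$ and a Taylor expansion, while the paper differentiates the integral directly.
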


\begin{proof}
Denote by $F(\theta)$ the left hand side of \eqref{eq1.8}. Then we have $F(0)=F'(0)=F''(0)=0$, $F'''(0)=2h(0,0)=2\neq 0$ and $F''''(0)=6\frac{\partial h}{\partial\theta}(0,0)$. By $\eqref{eq1.7}$, we have
\begin{equation}\label{eq1.9}
\int_{0}^x\theta h_2(x^2,\theta^2)(\theta^2-x^2)d\theta=0 \quad (\forall x,\ \text{or}\ \forall x\geq 0),
\end{equation}
where $h(x,\theta)=h_1(x,\theta^2)+\theta h_2(x,\theta^2)$. Denote by $G(x)$ the left hand side of $\eqref{eq1.9}$, then we have
\begin{equation*}
\frac{1}{x}\left(\frac{G'}{2x}\right)'(0)=-h_2(0,0).
\end{equation*}
It implies that $h_2(0,0)=0$. Therefore, $F$ can be written as 
$$F(\theta)=\theta^3\left(\frac{1}{3}+\theta^2H(\theta)\right).$$
Hence, we have a smooth function $\psi_0(\theta)=(3F(\theta))^{\frac{1}{3}}=\theta+\theta^3L(\theta)$. 
Now we need to find a smooth function
$\psi(x,\theta)=\psi_0(\theta)+x\theta\psi_1(x,\theta)$ such that the local diffeomorphism $\phi(x,\theta)=(x,\psi(x,\theta))$
fixes the points of $N=\{\theta^2=x\}$. We have 
$\psi(\theta^2,\theta)=\theta+\theta^3L(\theta)+\theta^3\psi_1(\theta^2,\theta)$, therefore it suffices to choose $\psi_1(x,\theta)=-L(\theta)$.
\end{proof}

\begin{lemma}
\label{lem:TangentPreNormal}
There exists a local smooth (or analytic) 
coordinate system $(x,\theta)$ in a neighborhood 
of $(0,0)$ such that
\begin{equation}
\beta=(\theta^2-x)d\theta.
\end{equation}
\end{lemma}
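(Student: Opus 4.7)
The plan is to produce an explicit local diffeomorphism transforming $\beta = h(x,\theta)(\theta^2 - x)\, d\theta$ into $(\theta^2 - x)\, d\theta$. Since $\beta$ has no $dx$-component, it is natural to keep $\theta$ fixed and absorb the factor $h$ into a redefinition of $x$. Concretely, I would set
\[
u(x, \theta) := \theta^2 - h(x, \theta)(\theta^2 - x), \qquad v(x, \theta) := \theta,
\]
so that $(v^2 - u)\, dv = h(x,\theta)(\theta^2 - x)\, d\theta = \beta$ identically, by construction.

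To verify that $(x, \theta) \mapsto (u, v)$ is a local diffeomorphism at $(0,0)$, I would compute the Jacobian. The map is upper-triangular with $\partial v/\partial \theta = 1$ and $\partial v/\partial x = 0$, so its determinant equals $\partial u/\partial x = h - h_x(\theta^2 - x)$, whose value at the origin is $h(0,0)$. The preparatory normalization $\int_0^{\sqrt{x}} \beta = -\frac{2}{3} x^{3/2}$ of Equation~\eqref{eq1.7} forces $h(0,0) = 1 \neq 0$, so $(u, v)$ is a valid local coordinate system near the origin; renaming $(u, v)$ as $(x, \theta)$ yields the desired form. As a sanity check, the singular surface $N = \{\theta^2 = x\}$ is preserved pointwise under this change of variables, since on $N$ one has $u = \theta^2 = v^2$.

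An equivalent derivation proceeds by Moser's path method, in the spirit of Lemma~\ref{lem:Darboux}: interpolate by $\beta_t := (1 + t(h - 1))(\theta^2 - x)\, d\theta$ between the target $\beta_0 = (\theta^2 - x)\, d\theta$ and $\beta_1 = \beta$, and seek a time-dependent vector field $Y_t = A_t(x,\theta)\, \partial_x$ whose flow $\phi_t$ satisfies $\phi_t^* \beta_t = \beta_0$. The ansatz $Y_t = A_t\, \partial_x$ makes $i_{Y_t}\beta_t$ vanish identically, so Cartan's formula reduces the Moser equation $\cL_{Y_t}\beta_t = -(h-1)(\theta^2 - x)\, d\theta$ to a single scalar equation for $A_t$ whose denominator equals $-1$ at the origin; hence $A_t$ is smooth (analytic in the analytic case), vanishes at the origin, and its flow on a small neighborhood of $(0,0)$ delivers the required diffeomorphism. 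In either approach, the only substantive point to check is the non-vanishing of $h(0,0)$, which is arranged by the preceding lemmas of this subsection.
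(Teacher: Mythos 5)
Your change of variables does transform the 1-form $\beta$, viewed in isolation, into $(\theta^2-x)\,d\theta$, and the Jacobian computation is fine; but it is not an admissible transformation for the role this lemma plays in the argument, and that is a genuine gap. The new first coordinate $u=\theta^2-h(x,\theta)(\theta^2-x)$ depends on $\theta$, so the map does not preserve the foliation by the lines $\{x=\mathrm{const}\}$, i.e.\ by the integral curves of the kernel field $Z=\partial/\partial\theta$ of $d\alpha$. Concretely, $\partial/\partial\theta$ pushes forward to $u_\theta\,\partial_u+\partial_v$, so in the coordinates $(u,v)$ the kernel of $d\alpha$ is no longer spanned by $\partial/\partial v$; equivalently, $dx_1$ acquires a $dv$-component and the term $g_1\,dx_1$ of $\alpha$ leaks into the $dv$-part of $\alpha$. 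The object that actually has to be normalized is not the abstract 1-form $\beta$ but the pairing $\langle\alpha,Z\rangle$ in a coordinate system in which $Z$ is the last coordinate field; after your change this pairing equals $(v^2-u)+g_1\,\partial x/\partial v$, not $v^2-u$. Consequently the next step of the paper fails: $\alpha-d(\theta^3/3-x_1\theta)$ retains a nonzero $d\theta$-component and is not a basic form, so the pre-normal form \eqref{eqn:alphaTangentPrenormal} is not obtained. The same objection applies to your Moser-path variant, whose vector field $Y_t=A_t\,\partial_x$ again displaces points transversally to the $\theta$-lines by a $\theta$-dependent amount.

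The warning sign is that your argument uses nothing from the preceding lemmas of this subsection beyond $h(0,0)=1$; if the statement could be proved this way, those lemmas would be pointless. The paper's proof insists on a transformation of the special form $(x,\theta)\mapsto(x,\xi(x,\theta))$, which only reparametrizes each integral curve of $Z$ and therefore preserves both the kernel foliation and the splitting of $\alpha$. For such transformations the integrals of $\beta$ along the $\theta$-lines between intrinsically defined points of $N=\{\theta^2=x\}$ are invariants, and the preceding lemmas are exactly what is needed to match these invariants with those of the model $(\theta^2-x)\,d\theta$ (this is the content of \eqref{eq1.7} and \eqref{eq1.8b}); they are then used in the proof of the present lemma to show that $G(x,\theta)$ is divisible by $(\theta^2-x)^2$, which is what makes the fiberwise change $\xi=\theta+x\nu$ possible. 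To salvage your approach you would have to show that $u$ can be chosen independent of $\theta$, and that is precisely the nontrivial statement that requires the preparatory steps.
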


\begin{proof}
For the moment, we can assume that $\beta = h(x,\theta)(\theta^2-x)d\theta$ satisfies the conclusions of the previous lemmas, and that
\begin{equation}\label{eq1.8b}
\int_{0}^{\theta}h(0,\eta)\eta^2d\eta
=\frac{\theta^3}{3}
\end{equation}
(which is given by the last lemma, after a coordinate transformation).
We will show that there is a smooth function 
\begin{equation}
\xi(x,\theta) =\theta+x\nu(x,\theta)
\end{equation}
with $\dfrac{\partial\xi}{\partial\theta}(0,0)\neq 0$, 
such that
\begin{equation}\label{eq1.10}
\int_{0}^{\theta}h(x,\eta)(\eta^2-x)d\eta=\int_{0}^{\xi(x,\theta)}(\eta^2-x)d\eta=\frac{1}{3}\xi^3(x,\theta)-x\xi(x,\theta).
\end{equation}
Denote by $F(x,\theta)$ the left hand side of \eqref{eq1.10}. Then \eqref{eq1.10} is equivalent to
\begin{equation}\label{eq1.11}
\frac{1}{x}\left(F(x,\theta)-\frac{\theta^3}{3}\right)+\theta=(\theta^2-x)\nu+x\theta\nu^2+\frac{x^2\nu^3}{3} .
\end{equation}
Denote by $G(x,\theta)$ the left hand side of \eqref{eq1.11}. Since $F(0,\theta)=\dfrac{\theta^3}{3}$, the function $G(x,\theta)$ is smooth. We claim that $G(x,\theta)=(\theta^2-x)^2U(x,\theta)$, where $U$ is some smooth function. Indeed, let us denote by $y=\theta^2-x$ and $V(y,\theta)=G(\theta^2-y,\theta)$.  
By \eqref{eq1.7}, we have $V(0,\theta)=0$. Moreover, 
\begin{align*}
\frac{\partial V}{\partial y}(0,\theta)&=\frac{1}{\theta^4}\bigg(\theta^2\int_{0}^{\theta}-\frac{\partial h}{\partial x}(\theta^2,\eta)(\eta^2-\theta^2)
\\& \quad\qquad +h(\theta^2,\eta)d\eta+\int_{0}^{\theta}h(\theta^2,\eta)(\eta^2-\theta)d\eta-\frac{\theta^3}{3}\bigg)\\
&=-\frac{1}{2\theta^3}\frac{\partial}{\partial\theta}\left(\int_{0}^{\theta}h(\theta^2,\eta)(\eta^2-\theta^2)\right)-\frac{1}{\theta}\\
&=0, 
\end{align*}
which shows that $G(x,\theta)=(\theta^2-x)^2U(x,\theta)$. Let's write $\nu(x,\theta)=(\theta^2-x)\mu(x,\theta)$. Then \eqref{eq1.11} is equivalent to
$$U(x,\theta)=\mu+x\theta\mu^2+\frac{x^2(\theta^2-x)\mu^3}{3}.$$
The existence of $\mu$ is now due to the implicit function theorem, and $(x,\xi)$ (with $\xi$ playing the role of new $\theta$) is the desired new coordinate system which satisfies the conclusion of the lemma.
\end{proof}

After the above last step (Lemma \ref{lem:TangentPreNormal}), we get the following (smooth or analytic) pre-normal form for $\alpha$ near a 
generic tangent nondegenerate singularity:
\begin{equation} \label{eqn:alphaTangentPrenormal}
\alpha= (\theta^2 - x_1) d\theta +  \sum_{i=1}^{2n} g_i dx_i = d\left(\frac{\theta^3}{3}-\theta x_1\right)+  \gamma
\end{equation}
where
$\gamma =  (g_1 + \theta) dx_1 + \sum_{i=2}^{2n} g_i dx_i $
is a basic 1-form with respect to $\dfrac{\partial}{\partial\theta}$ (because the kernel of $d\gamma = d\alpha$ is generated by
$\dfrac{\partial}{\partial\theta}$). In other words, the functions $g_1 + \theta, g_2,\hdots, g_{2n}$ do not depend on $\theta$. Moreover, 
$\omega := d\gamma = d\alpha$ is a symplectic form in $2n$
variables $(x_1,\hdots, x_{2n})$ (once we forget about the variable $\theta$). Notice that the fractional coefficient $\dfrac{1}{3}$
in Formula \eqref{eqn:alphaTangentPrenormal} can be erased by a simple
linear change of variables. Thus we have proved the tangent part of
Theorem \ref{thm:prenormalization}. Remark that, via the induction
construction of a canonical system of coordinates of a symplectic form
(by choosing the coordinates one by one), we can assume that
$\omega = d\gamma = \sum_{i=1}^n dx_i \wedge dx_{n+i}$, where the function
$x_1$ already appears in the term $d (\theta^3- x_1 \theta)$ of 
the expression of $\alpha$ (i.e., we can take this given function as the first coordinate in a canonical system of coordinates for $\omega$).

The tangent part of Theorem \ref{thm:prenormalization} together with Theorem \ref{thm:PrimitiveNF} give us a normalization of $\alpha$ in the tangent nondegenerate singular case. In particular, similarly to the previous subsection, under the nonresonance assumption we can linearize $\gamma$ to get
the following normal form for $\alpha$:
\begin{equation}\label{eqn:LinearizationTangent}
\alpha=d(\theta^3-\phi(x_1,\ldots,x_{2n})\theta)+\sum_{i=1}^n\lambda_ix_idx_{n+i}+
\sum_{i=1}^n(\lambda_i -1) x_{n+i}dx_{i}.
\end{equation}

Notice that, in \eqref{eqn:LinearizationTangent}, the function $\phi$
is the old function $x_1$ in the pre-normal form of $\alpha$ before the
linearization (or normalization in general) of $\gamma$, but now it is not the new $x_1$ but a regular function $\phi(x_1,\ldots,x_{2n})$ which
cannot be linearized in general in the new variables $(x_1,\ldots,x_{2n})$,
contrary to the transversal nonresonant case where $\alpha$ can be fully
linearized. The reason is that $\gamma$ and $\phi$ are independent,
and a priori they cannot be simultaneously linearized. 

For example, consider the following very simple situation, with $n=1$:
\begin{equation}
\gamma = \dfrac{1}{2} x dy - \dfrac{1}{2} y dx, \quad \phi = x^2 + y.
\end{equation}
In this case, the associated conformal vector field is $X = \dfrac{1}{2} (x \dfrac{\partial}{\partial x} + y \dfrac{\partial}{\partial y})$ (in any linearizing system for $\gamma$). So if $\phi$ is to
be also linear in such a coordinate system, we must have that 
$X(\phi) = \phi/2$, but it is not the case here. In other words, $\phi$ can never be linear in a coordinate system in which $\gamma$ is also linear.

\vspace{0.5cm}

\section*{Acknowledgements}

Kai Jiang and Nguyen Tien Zung would like to thank the Center for
Geometry and Physics, Institute for Basic Science (South Korea) for the
invitation to the Center and for excellent working conditions, which enabled
us to work together intensively on some parts of this joint research 
project in April 2018.


\begin{thebibliography}{10}
\baselineskip0.4cm


\bibitem{Bruno_Book1989}
A. D. Bruno, Local Methods in Nonlinear Differential Equations, Springer-Verlag,
Berlin-New York, 1989

\bibitem{Chaperon1999}
M. Chaperon, A remark on Liouville vector fields and a theorem of Manouchehri. \textit{Ergodic Theory and Dynamical Systems}, 19 (1999), 895-899.


\bibitem{Geiges-Contact2004}
H. Geiges, Contact Geometry, 2004. (Book available online)

\bibitem{JaZh-NF1995}
B. Jakubczyk, M. Zhitomirskii, “Singularities and normal forms of generic 2-distributions on 3-manifolds”, Studia Math. 113 (1995), p.223--248.

\bibitem{JaZh-SingularContact2001}
Jakubczyk, B.; Zhitomirskii, M. Local reduction theorems and invariants for 
singular contact structures. Ann. Inst. Fourier (Grenoble) 51 (2001), no. 1, 237–295.

\bibitem{JaZh-Corank1-2003}
B. Jakubczyk, M. Zhitomirskii, 
Distributions of corank 1 and their characteristic vector fields. 
Trans. Amer. Math. Soc. 355 (2003), no. 7, 2857–2883. 

\bibitem{Lychagin-1stOrder1975}
V.V. Lychagin, Local classification of non-linear
first-order partial differential equations, Russian
Math. Surveys, 30 (1975), No. 1.

\bibitem{Malgrange_Ideals}
B. Malgrange, Ideals of differentiable functions, Tata Institute of Fundamental Research Studies in Mathematics, 3, London, 1967

\bibitem{Manouchehri}
M. Manouchehri, {Formes normales d'{\'e}quations diff{\'e}rentielles implicites et de champs de Liouville}, \textit{Ergodic Theory and Dynamical Systems}, \textbf{16}, (1996), 779--789.



\bibitem{Martinet-Formes1970}
J. Martinet, Sur les singularites des formes differentielles, 
\textit{Ann. Inst. Fourier}, \textbf{20} (1970), No.1, 95–-178.

\bibitem{Mx_PoissonVF2006}
Ph. Monnier, N.T. Zung, Normal forms of  vector fields on
Poisson manifolds, Ann. Blaise Pascal, \textbf{13}, (2006), 349--380. 





\bibitem{Pelletier-1Form1985}
F. Pelletier, Singularites d’ordre superieur de 1-formes, 
2-formes et equations de Pfaff, \textit{Publications Mathematiques IHES}, 
\textbf{61} (1985), 129–-169.

\bibitem{Russarie-Local1975}
R. Roussarie, Modèles locaux de champs et de formes, 
\textit{Astérisque} \textbf{30} (1975), p. 1--181.

\bibitem{Ste}
Sternberg, S.; {\it On the structure of local homeomorphisms of Euclidean 
n-space, II}, Amer. J. of Math., 80 (1958), 623-631.

\bibitem{Walcher}
S. Walcher, {\it Symmetries and convergence of normal form transformations}, {Monogr. Real Acad. Ci. Exact. F{\'\i}s.-Qu{\'\i}m. Nat. Zaragoza}, 
25 (2004), 251--268.
\bibitem{Webster-1stOrder1987}
S.M. Webster, A normal form for a singular first order
partial differential equation, Amer. J. Math. 
109 (1987), 807--832.

\bibitem{Zhito-1Form1986}
M. Zhitomirskii, Smooth and holomorphic linearization of
holomorphic 1-forms, \textit{Functional Anal. Applic.} 20 (1986). 

\bibitem{Zhito-Pfaff1989}
M. Zhitomirskii, Singularities and normal forms of odd-dimensional 
Pfaff equations, \textit{Functional Anal. Applic.} 23 (1989), p.59--61. 

\bibitem{Zhito-1Form1992}
M. Zhitomirskii, \textit{Typical singularities of differential 1-forms and 
Pfaffian equations}, Translations of Math. Monographs Vol. 113, 
AMS, Providence, 1992.

\bibitem{Zung_Birkhoff2005}
N.T. Zung, {\it Convergence versus integrability in Birkhoff normal form}, Annals of Math, Volume 161 (2005),  141--156.


\bibitem{Zung_Integrable2016}
N.T. Zung, Geometry and Dynamics of Integrable Systems,
in: Geometry and Dynamics of Integrable Systems
(A. Bolsinov, J.J. Morales-Ruiz, N.T. Zung authors),
Advanced Courses in Mathematics - CRM Barcelona, 2016,
pp. 85--140. 

\bibitem{Zung_AA2018}
N.T. Zung, \textit{A conceptual approach to the problem of action-angle variables}, Arch Rational Mech Anal (2018). https://doi.org/10.1007/s00205-018-1227-3
\end{thebibliography}
\end{document}